\documentclass[a4paper,12pt]{amsart}

\usepackage{geometry}
\usepackage{amssymb}
\usepackage{graphicx}
\usepackage{amsmath,amsthm}
\usepackage[english]{babel}
\usepackage{tikz-cd}
\usepackage{placeins}
\usepackage{hyperref}
\usepackage{todonotes}

\hyphenation{bio-lo-co-mo-tion}

\newcommand{\R}{\ensuremath{\mathbb{R}}}
\newcommand{\lb}{\ensuremath{\langle}}
\newcommand{\rb}{\ensuremath{\rangle}}
\newcommand{\pder}[2]{\ensuremath{\frac{\partial #1}{\partial #2} } }
\newcommand{\np}{\ensuremath{\mathrm{np} } } 
\newcommand{\ns}{\ensuremath{\mathrm{ns} } } 
\newcommand{\db}{\ensuremath{\mathrm{db} } } 
\newcommand{\s}{\ensuremath{\mathrm{s} } }   
\newcommand{\g}{\ensuremath{\mathrm{g} } }   
\newcommand{\pr}{\ensuremath{\mathrm{pr}}}
\renewcommand{\d}{\ensuremath{\mathrm{d}}} 
\newcommand{  \T}{\ensuremath{\mathrm{T}}} 
\newcommand{  \D}{\ensuremath{\mathrm{D}}} 

\DeclareMathOperator{\kernel}{kernel}

\usepackage{mathtools}

\DeclarePairedDelimiter\norm{\lVert}{\rVert}

\DeclareMathOperator{\SE}{SE}
\DeclareMathOperator{\SO}{SO}
\DeclareMathOperator{\ad}{ad}

\newtheorem{thm}{Theorem}
\newtheorem{lem} [thm]{Lemma}
\newtheorem{prop}[thm]{Proposition}

\newtheorem{defn}[thm]{Definition}
\newtheorem{assn}[thm]{Assumption}

\theoremstyle{remark}
\newtheorem{rmk}[thm]{Remark}

\title{The role of symmetry and dissipation in biolocomotion}

\author{Jaap Eldering}
\address{%
  Department of Mathematics\\
  Imperial College London\\
  London SW7 2AZ\\
  United Kingdom
}
\email{j.eldering@imperial.ac.uk}

\author{Henry O. Jacobs}
\address{%
  Department of Mathematics\\
  Imperial College London\\
  London SW7 2AZ\\
  United Kingdom
}
\email{hoj201@gmail.com}

\date{\today} 

\begin{document}

\begin{abstract}
In this paper we illustrate the potential role which relative limit cycles may
play in biolocomotion.
We do this by describing, in great detail, an elementary example of reduction of a lightly dissipative system modeling crawling-type locomotion in 3D.
The symmetry group $\SE(2)$ is the set of rigid transformations of the horizontal (ground) plane.
Given a time-periodic perturbation, the system will admit a relative limit cycle whereupon each period is related to the previous by a fixed translation and rotation along the ground.
This toy model identifies how symmetry reduction and dissipation can conspire to create robust behavior in crawling, and possibly walking, locomotion.
\end{abstract}

\maketitle

\section{Introduction}
  The notion of limit cycles is important in biolocomotion because simple periodic behavior is a defining characteristic of walking, running, swimming, flapping flight,...
  In order to construct realistic mathematical models that exhibit limit cycles, it is helpful to first identify some core mechanisms of limit cycle production.
  In particular, a biolocomotive gait, such as skipping or crawling, has three primary ingredients:
  \begin{enumerate}
  \item it is time-periodic;
  \item with each period the body translates and rotates in space;
  \item it is robust to noise and systemic variations.
  \end{enumerate}
  The combination of these ingredients is known to dynamical system theorists as an $\SE(n)$-relative limit cycle.
  Here $\SE(n)$ is the special Euclidean group for $\R^n$ (i.e.\ rotations and translations of $\R^n$).
  Such an object is a trajectory of a dynamical system with $\SE(n)$-symmetry,
  such that its image is a limit cycle under reduction by $\SE(n)$.
  In summary, an $\SE(n)$-relative limit cycle is just a limit cycle modulo rotations and translations.

  Presently much of the literature on biolocomotion concerns the search for limit cycles without addressing the role of symmetry.
  Such limit cycles are made robust through a mixture of dissipation and the dimension reduction which occurs across the transition maps in hybrid systems.
  Many of these systems occur in a regime with a mixture of friction and inertial forces.
  In contrast, the symmetry and reduction theoretic aspects of biolocomotion are very well studied in the geometric mechanics community,
  but only in regimes where friction or inertial forces dominate~\cite{Purcell1977,ShapereWilczek1989,Montgomery1990,Montgomery1993,Ostrowski1994,Koiller1996,KellyMurray2000,Kanso2005}.
  The mixed regime is mostly left unstudied by the geometric mechanics community.
  The goal of this article is to address these gaps by illustrating a simple example of an $\SE(2)$ invariant system which models crawling
  via an $\SE(2)$-relative limit cycle.
  We combine techniques from geometric mechanics, hyperbolic stability and singular perturbation theory.
  Through this analysis, one can see how these techniques can be generalized to more sophisticated and realistic models.

\subsection{Outline of the paper}
  We start with an overview of the background and motivation in section~\ref{sec:background}.
  In section~\ref{sec:connections} we review the geometric mechanics of biolocomotion in the viscous dominated and inertial regimes.
  We find that the use of connections in the middle regime is less natural.
  This lack of naturality motivates the approach of the present paper, which implements Lagrangian reduction without the explicit use of the mechanical or the Stokes connections.
  Then, in section~\ref{sec:model}, we introduce our model of an (unactuated) crawler: a mass-spring system resting on the ground, see Figure~\ref{fig:3D-crawler}.
  We regularize the no-slip and no-penetration conditions imposed by the ground by `smearing them out' over a small region around $z = 0$ to smooth viscous friction (c.f.~\cite{Brendelev81,Karapetian81}) and smooth potential energies (c.f.~\cite{Rubin57,Takens80}).
  What results from this regularization is a constant dimension, lightly dissipative Lagrangian system.
  Having described the problem as an ODE on a space of constant dimension, we apply symmetry reduction and smooth dynamical systems theory in section~\ref{sec:analysis}.
  The system is invariant under isometric transformations along the ground, and so
  we implement reduction by this group of transformations~\cite{MarsdenWeinstein1974,CeMaRa2001}.
  Under mild regularity conditions (Assumption~\ref{ass:stability-assumptions}), we can use singular perturbation theory to find a robustly stable equilibrium for the reduced model.
  Next, under small time-periodic forcing (i.e.~actuation of the crawler) this equilibrium persists as a limit cycle in the symmetry reduced model as a result of the persistence theorem~\cite{Fenichel1971,Hirsch77}.
  The limit cycle in the reduced space corresponds to a \emph{relative limit cycle} in the original phase space, (see Figure~\ref{fig:commu}). A phase reconstruction formula gives the phase shift of the lifted, relative limit cycle; this phase shift corresponds to a translation and rotation achieved upon traversing a period of the relative limit cycle.
  Both relative periodicity and stability are characteristics of biolocomotion, and so we can consider the relatively periodic orbit as a model of crawling in this sense.

  \begin{figure}[htb]
    \centering
    \includegraphics[width=6cm]{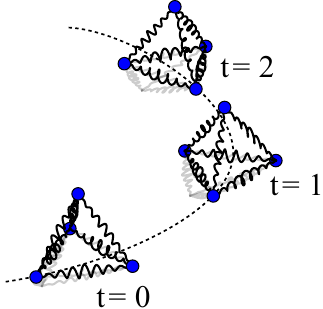}
    \caption{An illustration of a 3D crawler moving on the plane.}
    \label{fig:3D-crawler}
  \end{figure}

  \begin{figure}[ht!]
    \centering
  \begin{tikzpicture}[thick, scale=0.70,space/.style={rectangle, fill=white,draw=black,rounded corners}, text width=3cm, align=center]
	\node[space,scale=0.75] (TL) at (0,3.5) {unactuated system};
	\node[space,scale=0.75] (BL) at (0,0) {stable point in reduced space};
	\node[space,scale=0.75] (TR) at (7,3.5) {relative limit cycle \\ {\bf (e.g.~crawling)} };
	\node[space,scale=0.75] (BR) at (7,0) {stable limit cycle};
	\draw[->] (TL)--node[above,scale=0.75]{periodic force} (TR);
	\draw[->] (BL)--node[below,scale=0.75]{periodic force} (BR);
	\draw[->] (TL)--node[left,scale=0.75]{reduction} (BL);
	\draw[->] (TR)--node[right,scale=0.75]{reduction} (BR);
  \end{tikzpicture}
  \caption{This commutative diagram illustrates crawling emerging as a relative limit cycle from a stable, unactuated system.}
  \label{fig:commu}
  \end{figure}
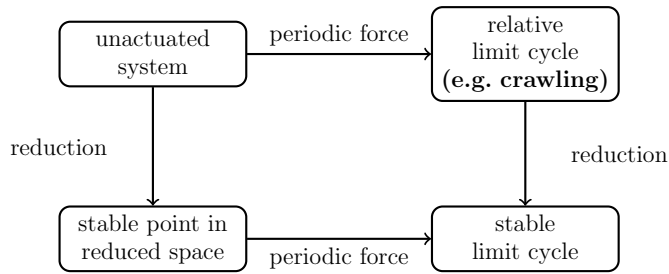

These results are summarized by the following theorem.
\begin{thm}[main theorem]\label{thm:limit-cycle}
  Let a simple crawler model be described by the Lagrange--d'Alembert equations~\eqref{eq:full-EoM}.
  Under Assumption~\ref{ass:stability-assumptions} the symmetry reduced system has a stable rest state.
  For sufficiently small time-periodic forcings, this rest state persists as a stable limit cycle, which corresponds to a relative limit cycle in the unreduced system that models crawling.
\end{thm}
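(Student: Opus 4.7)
My plan is to traverse the commutative diagram of Figure~\ref{fig:commu} in four steps. \emph{First}, I would perform the symmetry reduction: the Lagrange--d'Alembert equations~\eqref{eq:full-EoM} are invariant under the additive action of $\R$ on the horizontal coordinate, and this action preserves both the Lagrangian and the dissipative force. Applying Routhian reduction (extended to the Lagrange--d'Alembert setting by quotienting the tangent bundle of configuration space by the tangent-lifted action) yields a reduced ODE on the shape-plus-momentum space together with a first-order reconstruction equation for the group coordinate. This step is essentially a direct computation.

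\emph{Second}, I would analyse the unactuated reduced system. Setting velocities to zero yields an algebraic equilibrium condition balancing spring forces against the smeared ground reaction; by Assumptions~\ref{ass:stability-assumptions} this has a solution corresponding to the resting crawler. To establish asymptotic stability I would exploit the singular-perturbation structure produced by the ground regularization: the stiff vertical/contact modes contribute strongly negative eigenvalues in the linearization, while the remaining shape modes are damped by the Rayleigh-type dissipation coming from the viscous friction. Invoking Fenichel's theorem (or Tikhonov's theorem at the linearized level) yields a normally hyperbolic attracting slow manifold through the equilibrium, on which the reduced flow is asymptotically stable. I expect this step to be the main obstacle: one must verify that Assumptions~\ref{ass:stability-assumptions} really produce a uniform spectral gap in both the fast and slow directions, so that hyperbolicity survives the singular limit rather than being lost to a zero eigenvalue.

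\emph{Third}, I would turn on the small time-periodic forcing. A hyperbolic, asymptotically stable equilibrium of a smooth autonomous vector field persists, under $C^1$-small time-periodic perturbations, as a hyperbolic asymptotically stable periodic orbit of the same period. This is the standard implicit-function-theorem argument applied to the fixed-point equation for the period-$T$ map, and follows equally from the persistence of normally hyperbolic invariant manifolds. Applied to the reduced system with the projected actuation, this produces the stable limit cycle in the bottom-right corner of Figure~\ref{fig:commu}.

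\emph{Finally}, integrating the reconstruction equation over one period of the reduced limit cycle produces a fixed group element $\Delta \in \R$, the stride length, via the phase-reconstruction formula. The lifted trajectory in the unreduced phase space is therefore relatively periodic with shift $\Delta$ per period, which is precisely the crawling motion claimed by the theorem.
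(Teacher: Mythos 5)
Your steps 1, 3 and 4 track the paper's route closely: the reduction is the quotient of $TQ$ by the lifted $\R$-action (calling it ``Routhian'' is a slight misnomer since dissipation destroys momentum conservation, but the construction you describe is the same), the forced limit cycle is obtained from persistence of the hyperbolic equilibrium in the time-augmented phase space (normally hyperbolic invariant manifold persistence or, equivalently, the period-map argument), and the relative limit cycle with stride $\Delta x$ comes from the reconstruction formula~\eqref{eq:reconstruction}. The genuine gap is in your step 2, in two places. First, the existence of the rest configuration is not merely ``an algebraic condition with a solution by Assumptions~\ref{ass:stability-assumptions}'': in the paper this is Proposition~\ref{prop:potential-minimum}, and it is precisely where the singular-perturbation idea is used --- one rescales with $\varepsilon = 1/\kappa_s$, solves the limit problem ($\ell_i = \bar{\ell}_i$ with $z_1, z_2$ fixed by the ground potential), checks that $DF(\hat{q}_0,0)$ is invertible, and invokes the implicit function theorem, afterwards verifying that the Hessian $\hat{K}$ is positive definite because the stiffness terms dominate $D^2 z_3$. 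You assert this step rather than prove it.

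Second, and more seriously, your stability mechanism rests on a false spectral premise. The model is \emph{lightly} dissipative: the damping coefficients are fixed while $\kappa_s, \kappa_{\np}$ are taken large, so the stiff contact and spring modes of the linearization have eigenvalues of the form $-\tfrac{\nu}{2} \pm i\sqrt{\kappa}$ --- large imaginary parts but bounded real parts. There are no strongly contracting fast directions, the equations are not in Tikhonov slow--fast form (in the fast time scale the stiff dynamics become weakly damped oscillations), and there is no attracting slow manifold to which Fenichel's theorem applies; normal hyperbolicity would require normal attraction dominating the tangential rates, which fails here. Moreover, even granting such a manifold, you never establish the non-degeneracy that actually makes the reduced equilibrium hyperbolic: each individual Rayleigh function $R_{\ns}$, $R_{\db}$, $R_\text{shape}$ has a non-trivial kernel ($F_{\ns}$ only damps horizontal motion near the ground, $F_\text{shape}$ does not damp rigid motions), and it is only the intersection-of-kernels argument of Proposition~\ref{prop:nondegenerate}, using the non-degenerate-triangle assumption, that makes the total damping matrix positive definite over $\hat{q}_*$. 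With $\hat{K} > 0$ and $\nu > 0$ in hand, the paper's Proposition~\ref{prop:stability} concludes that the reduced linearization is Hurwitz by an elementary energy/Lyapunov argument (with a compactness step, since $\dot{E}_L = -\lb u, \nu u\rb$ vanishes on the slice $u = 0$) --- no slow--fast machinery is needed, and indeed none is available. Without these two ingredients, the hyperbolicity on which your steps 3 and 4 depend is unproven.
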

We also show that the phase shift of the relative limit cycle depends on the magnitude of the perturbation to second order.

Finally, in section~\ref{sec:numerical-sim} we illustrate the theory with numerical experiments before stating our closing remarks in the conclusion.

\section{Background \& motivation} \label{sec:background}
  Biomechanics requires knowledge from a range of fields.
  This particular paper draws upon previous research in geometric
  mechanics, stability theory, as well as inspiration from
  experimental and numerical observations.

\subsection{Contact problems}
  The regularization we are going to pursue is in contrast to the hybrid systems approach, where transitions between different types of phase spaces are given by various transition maps.
  The hybrid systems approach expresses the non-constant nature of the dimension in contact problems explicitly, and has yielded a number of insights and useful models.
  For example, a hybrid systems formulation was introduced by McGeer~\cite{McGeer1990}, where the transition maps led to a dimension reduction; it was suggested that a limit cycle was approached passively.
  Since the work of~\cite{McGeer1990}, the notion of walking as a limit cycle has become more common, and more sophisticated analyses have lent further support to this idea~\cite{GaChaRuCo98,GoThuEs98}.
  The most compelling arguments are the original videos of McGeer which accompany~\cite{McGeer1990}.

\subsection{Biology and Engineering}
    On the biological side, `central pattern generators' (CPGs) have been hypothesized as fundamental neural mechanism used in biolocomotion~\cite{GrWa85}.
  These CPGs are non-localized collections of neurons which produce rhythmic activity, and respond to various inputs which modulate these rhythms.
  Therefore the link between CPGs and limit cycle biolocomotion is one which links periodic activation of the controls to periodic motion of the body.
  This link is used in the creation of simple models which can be feasibly analyzed (see for example~\cite{GoThuEs98}).
   A similar regularization of ground contact which will be presented in this paper is used in~\cite{Verdaasdonk2009}, which studies robustness and efficiency of a simple passive dynamic walking model actuated by CPGs, although the role of symmetry was not addressed there.

  The notion of a CPG is significant from the perspective of biologically inspired control theory because less demand is placed upon the control law when locomotion is achieved
  primarily through an open-loop control.
  For example, under weak assumptions, the existence of limit cycles in hybrid systems implies the existence of a reduced order model for the system as a whole~\cite{BuReSa2011}.

\subsection{Geometric mechanics}
  There is a long history of using geometric mechanics to study locomotion.
  Purcell's three link swimmer~\cite{Purcell1977}
  inspired Shapere and Wilczek to interpret locomotion in Stokes flow as
  phase shift due to the curvature of a principal connection~\cite{ShapereWilczek1989}.
  The simplicity of this perspective has
  proven useful in other dissipation dominated systems such as granular media~\cite{Hatton2013}.
  It was later found that a range of examples
  of locomotion fit within this geometric framework~\cite{Montgomery1993,Koiller1996}.
  In particular, many conservative systems could be analyzed in this way~\cite{Montgomery1990,KellyMurray2000,Kanso2005,Ostrowski1994}.

  Despite the success of the gauge theoretic picture of locomotion, the
  vast majority of examples of this perspective concern
  systems which are either conservative (i.e.~Hamiltonian or Lagrangian), or friction dominated
  (i.e.~where Newton's law, $\ddot{q} \propto F$, is replaced
  by $\dot{q} \propto F$).  There appear to be very few examples which
  invoke the gauge theoretic perspective of~\cite{ShapereWilczek1989}
  in a regime which exhibits a mixture of viscous and inertial forces.
  The paper~\cite{KellyMurray1996} by Kelly and Murray is a notable
  exception, where they discuss both mechanical and Stokes connections
  and study control of systems in the intermediate regime by viewing
  friction as a drift term to the system with mechanical connection.
  They also note that the mechanical and Stokes connections \emph{cannot}
  simply be interpolated.
  This gap between the inertial and viscous regimes is one which the current paper seeks to address.
  
  Again, the middle regime has been shown to be more than merely an interpolation
  between the two extreme regimes.
  For example, the scallop theorem states that a system in the viscosity dominated regime with only
  one degree of freedom in shape space cannot move.\footnote{This is a slight simplification which assumes that the shape space has a trivial first Homology group.
  	Nonetheless this is the ``popular'' conception of the scallop theorem, for better or worse.}
  It was shown that the scallop theorem is violated if one modifies the friction and allows for inertial forces to play a role~\cite{WagnerLauga2013}.
  The geometry of this system was not explored, but~\cite{WagnerLauga2013}
  provided an insightful counter example to the scallop theorem in the regime where inertial and viscous forces both play sizable roles.
  Indeed, in this paper we want to argue that the gauge theoretic
  picture of~\cite{ShapereWilczek1989,KellyMurray1996,Montgomery1990} using connections does not persist, or at least not in a
  clear way. Instead, we propose a more general geometric framework,
  describing the symmetry on the vector bundle that follows
  from quotienting phase space by the symmetry, without invoking the
  Lagrange--Poincar\'e decomposition which results from choosing a
  connection. The phase shift can be recovered from a reconstruction
  formula that is implicitly specified by a dynamical perturbation
  argument. In the next section we give a short overview of the gauge
  connection picture and how our setting addresses a gap within this picture.

 As a final point, the role of symmetry is well acknowledged within the geometric mechanics literature but 
 this is not to say that it is absent from the biomechanics literature, for example the importance of discrete symmetries of solutions is widely acknowledged~\cite{Raibert1986a,Raibert1986b}.
 However, tools such as momentum maps and connections are typically not used.
 This is possibly due to the fact that geometric mechanics has only addressed the extreme regimes, 
 while many popular biomechanical models fall within the middle regime.
 A notable exception is~\cite{GreggRighetti2013}, where the Noetherian
 momentum associated to an $S^1$~symmetry
 was used to create turning trajectories based upon the work of~\cite{AmesThesis,AmesSastry2006}.
 Here we will be exploring a different, but related, application of symmetry reduction
 where Noether's theorem is never invoked (nor does it apply).

\section{The gap between the mechanical and Stokes connections}
\label{sec:connections}

In this section we will explore the traditional use of connections
in understanding locomotion.
We will find that the use of connections is
unmotivated when there is a mixture of inertial and viscous forces.
This section is
aimed at an audience which is familiar with these more established techniques.
As the goal of the section is to illustrate why we should
\emph{not} use these tools, we recommend that the reader should skip this section
if she is unfamiliar with the use of connections in locomotion, at least upon a first reading.

The typical setup of the configuration space in biolocomotion is that
of a principal $G$-bundle, where $G$ is the (spatial) symmetry group
of the system and generates the directions in which locomotion can
take place. That is, we have a configuration space $Q$ and a left
$G$-action on $Q$, such that the quotient projection
$\pi\colon Q \to S := G \backslash Q$ is a left $G$-principal bundle.
The base $S$ is conventionally called the `shape space', as it
describes the state of the system modulo its position.

Assuming that the system is symmetric under $G$, we can consider the
reduced dynamics on $G \backslash \T Q$.
This bundle is locally isomorphic to $\T S \times \mathfrak{g}$
and is naturally a vector
bundle over $S$; in the fibers, all velocities are retained, since the
dynamics may still depend on these velocities even though it does not
depend on the underlying points in the orbits of $G$.

Given a connection, this can be decomposed in a vector bundle sum
\begin{equation}\label{eq:LP-splitting}
  G \backslash \T Q \cong \T S \oplus \tilde{\mathfrak{g}}
\end{equation}
where $\tilde{\mathfrak{g}}$ is the adjoint bundle and a natural
vertical distribution in $G \backslash \T Q$, while the connection is
used to identify $\T S$ with a horizontal distribution.

Now there are two natural and useful choices of connection in the
limit cases (see e.g.~\cite{KellyMurray1996}): the mechanical
connection, for when the dynamics is Hamiltonian, and the Stokes
connection in case of a friction dominated limit, i.e.\ high Reynolds
number in swimming-like locomotion or high Froude(-like) number in
terrestrial, finite-dimensional locomotion models.
We shall see that in the middle regime there is generally no natural choice of connection in order to understand locomotion.
This lack of a natural choice will motivate us to avoid the use of a connection later in the paper.

The goal of this section is to illustrate this inability to address the middle regime.
We shall begin by discussing the general setup of a dissipative
mechanical system on a manifold before describing the role of connections in understanding locomotion.

\subsection{Lagrangian mechanics with dissipation}

Let us briefly return to a setup without symmetry assumption.
A mechanical system with (viscous) friction can be represented
using a Lagrangian to model the conservative part and a Rayleigh
function to model friction. Let
\begin{equation*}\label{eq:Lagrangian}
  L(q,\dot{q}) = \frac{m}{2} k_q(\dot{q},\dot{q}) - V(q)
\end{equation*}
denote the Lagrangian with kinetic energy metric $k$ which turns $(Q,k)$
into a Riemannian manifold, and with potential $V$. Let
\begin{equation*}\label{eq:Rayleigh-function}
  R(q,\dot{q}) = \frac{c}{2} \nu_q(\dot{q},\dot{q})
\end{equation*}
denote a Rayleigh dissipation function that defines a friction force
given by minus its fiber derivative, see~\cite[Def.~3.5.2]{FOM}. Note
that $\nu_q$ is assumed to be a positive definite quadratic form on
$\T_q Q$ that depends smoothly on $q$, hence $\nu$ is a metric on $Q$,
like $k$. The parameters $m$ and $c$ will allow us to consider the
Hamiltonian and dissipation dominated limits.

Now the Lagrange equations of motion are given by
\begin{equation}\label{eq:EoM}
  m\,k^\flat \cdot \nabla^k_{\dot{q}} \dot{q}
  = -\d V(q) - c\,\nu_q^\flat \cdot \dot{q},
\end{equation}
to which an extra force $F \in \T^* Q$ can be added on the right-hand
side. When we take the limit of $c \to 0$, we straightforwardly
converge to a conservative system. When $m \to 0$, a more careful
singular perturbation analysis shows (using the assumption that $\nu$
is positive definite, see Appendix~\ref{app:friction-dynamics}) that
\begin{equation}\label{eq:invar-1st-order-dynamics}
  M = \{ c\,\nu_q^\flat \cdot \dot{q} = -\d V(q) \} \subset \T Q
\end{equation}
is a well-defined, attractive invariant manifold for the limit
dynamics. We can view $M$ as a submanifold of $\T Q$, but since $M$ is
the graph of a section of $\T Q$, we can also view it as a vector
field that generates first order dynamics on $Q$. This is the
Stokesian limit
\begin{equation}\label{eq:EoM-Stokes}
  \dot{q} = -\frac{1}{c} \nu_q^\sharp \cdot \d V(q).
\end{equation}

\subsection{Mechanical connections}

If $k$ is a kinetic energy metric on $Q$ that is invariant under $G$,
then it defines a connection on $\pi\colon Q \to S$ by defining the
horizontal space $\textrm{Hor}^k(\T Q)$ complementary to
$\textrm{Ver}(\T Q) := \mathfrak{g}\cdot Q \subset \T Q$ as its
perpendicular under $k$. This can be viewed as a sub vector bundle
$\textrm{Hor}^k(\T Q) \subset \T Q$ and descends through the quotient
by $G$ to a sub vector bundle of $G \backslash \T Q$. Note that this
connection is \emph{not} the Levi-Civita connection defined by $k$,
since the Levi-Civita connection induces a splitting of $\T \T Q$, at
one higher level.

Now if the complete system is invariant under the (lifted) action of
$G$, then by symmetry, $\textrm{Hor}^k(\T Q)$ is an invariant
submanifold for the dynamics, and it is exactly the level set of zero
momentum under the (Lagrangian) momentum map induced by the action of
$G$. If we use this connection for the
identification~\eqref{eq:LP-splitting}, then it implies that the
$\tilde{\mathfrak{g}}$ component is constantly zero, hence we can
reduce to dynamics on $\T S$, and after solving that, lift solution
curves to $G \backslash \T Q$ and even $\T Q$, using the mechanical
connection and integration of the vertical component, respectively.

In this case the equations of motion~\eqref{eq:EoM} reduce to
\begin{equation}\label{eq:EoM-Hamiltonian}
  m\,k^\flat \cdot \nabla^k_{\dot{q}} \dot{q} = -\d V(q)
\end{equation}
and exhibit $\textrm{Hor}^k(\T Q)$ as invariant submanifold.

\subsection{Stokes connections}

The Stokes connection is defined in the same way as a mechanical
connection, but now using the metric $\nu$ on $Q$. In this case we
assume that friction forces dominate the inertial forces and, hence,
the dynamics is only first order, and given by~\eqref{eq:EoM-Stokes}.
Next, it is typically assumed, e.g.\ in Stokes flow swimming, that
there is an external force $F(t)$ exerted by the swimmer, say, which
physically implies that $F(t) \in \textrm{Ver}(\T Q)^0$, the
annihilator of $\textrm{Ver}(\T Q)$. Since $V$ was assumed
$G$ invariant it follows that $\d V(q) \in \textrm{Ver}(\T Q)^0$ too,
and using the fact that
$\nu^\sharp\big(\textrm{Ver}(\T Q)^0\big) = \textrm{Hor}^\nu(\T Q)$ by
definition, we obtain the control system
\begin{equation}\label{eq:control}
  \dot{q} = f(q) + u(t) \in \textrm{Hor}^\nu(\T Q)
\end{equation}
where $f(q) = -\frac{1}{c} \nu_q^\sharp \cdot \d V(q)$ and
$u(t) = \nu_q^\sharp \cdot F(t)$. In particular, given a curve
$s(t) \in S$, we can lift it to a curve $v(t) \in G \backslash \T Q$
using the Stokes connection and~\eqref{eq:control}. This corresponds to
the unique motion in $Q$ such that no work is done in the directions
of the symmetry $G$. See also~\cite{KellyMurray1996} and more details in
Appendix~\ref{app:friction-dynamics}.

\subsection{The middle regime}

Let us now return to study dynamics on $\pi\colon Q \to S$ in the
middle regime where both inertial and frictional forces are present,
i.e.\ neither $m$ nor $c$ is negligibly small. We can rewrite the
equations of motion~\eqref{eq:EoM} as
\begin{equation*}
  m\, \nabla^k_{\dot{q}} \dot{q}
  = - k^\sharp \cdot \d V(q)
    - c\, k^\sharp \cdot \nu^\flat \cdot \dot{q}.
\end{equation*}
Since $\d V(q) \in \textrm{Ver}(\T Q)^0$ it follows that the first
right-hand side term lives in $\textrm{Hor}^k(\T Q)$, hence that part
of the dynamics preserves the splitting
$\textrm{Hor}^k(\T Q) \oplus \textrm{Ver}(\T Q)$. The mapping
\begin{equation*}
  k^\sharp \cdot \nu^\flat\colon \T Q \to \T Q,
\end{equation*}
however, will generally not preserve this splitting, so the mechanical
connection does not yield a reduction here. If $m > 0$ is sufficiently
small, we can actually still reduce to a first order system. This can
be considered the `perturbed Stokes regime' where the first order
ODE~\eqref{eq:EoM-Stokes} does not accurately hold anymore, but
approximations can be found using singular perturbation theory. The
corrections, however, cannot be interpreted as a connection anymore,
see Appendix~\ref{app:friction-dynamics} for the details.

The lack of a natural connection in the middle regime suggests that we should implement
reduction by symmetry without the use of a connection.
In the language of geometric mechanics, this means we shall derive equations of motion on the Atiyah algebroid $\T Q / G$, as in~\cite{Weinstein1996}, as opposed to a Lagrange--Poincar\'e bundle $( \T(Q/G) \oplus \tilde{\mathfrak{g}} , \d A)$, as in~\cite{CeMaRa2001}.

\section{The model}\label{sec:model}
  The model can be broken into two distinct components: the crawler and the environment.
  The crawler consists of four masses connected by springs while the environment consists of the ground and a gravitational field.
  We will discuss the model of the crawler in empty space before we elaborate on how to include interactions with the environment.

\subsection{A model of a crawler (in a vacuum)}

  The crawler consists of four point particles of unit mass all connected by springs of stiffness $\kappa_\s$ with light viscous damping $c_\s$, see Figure~\ref{fig:3D-crawler}. We describe the crawler as a Lagrangian mechanical system with additional forces to model the spring damping.
  The point particles move through space with positions ${\bf x}_i = (x_i,y_i,z_i) \in \R^3$ and velocities $\dot{\bf x}_i = (\dot{x}_i,\dot{y}_i,\dot{z}_i) \in \R^3$ for $i = 1,2,3,4$.
  For reasons to be clarified shortly, we will exclude configurations where any of the particles overlap,
  and the configurations where all of the $(x,y)$ coordinates overlap.
  Thus the configuration space is a (dense) open set $Q \subset \R^{12}$.
  We will use ``$q$'' to denote a generic point of $Q$ and $(q^1,\dots,q^{12})$ to denote generalized coordinates of $Q$.

  The kinetic energy is given by $T = \frac{1}{2} \sum_{i=1}^4 \|\dot{\bf x}_i\|^2$ with the usual Euclidean metric.
  This endows $Q$ with a flat Riemannian structure, and we will denote the Riemannian metric by $k$ or $k_{ij}(q)$ in generalized coordinates.
  The potential energy from the springs, $U_\s$, is more easily expressed in other coordinates: the spring lengths,
  i.e.\ the pair-wise distances between the points ${\bf x}_1,\dots,{\bf x}_4$.
  We therefore introduce six (local) coordinate functions
  \begin{equation*}\label{eq:length-variables}
    \ell_{ij} = \norm{ {\bf x}_i - {\bf x}_j} = \sqrt{ (x_i -x_j)^2 +(y_i-y_j)^2+ (z_i - z_j)^2 }
  \end{equation*}
  for $i < j$ and $i,j = 1,\dots,4$.
  The potential energy of the springs is now simply given by
  \begin{equation}\label{eq:spring-potential}
    U_\s = \frac{\kappa_\s}{2} \sum_{i < j} \left( \ell_{ij} - \bar{\ell}_{ij} \right)^2
  \end{equation}
  where $\bar{\ell}_{ij}$ are constants which denote the rest length of spring between ${\bf x}_i$ and ${\bf x}_j$.

  We define the viscous force of each spring by the one-form
  \begin{equation}\label{eq:Fs-friction-ell}
    F_{ij} = -c_\s\dot{\ell}_{ij} \d\ell_{ij}.
  \end{equation}
  In terms of the usual ${\bf x}_i = (x_i,y_i,z_i)$ coordinates these six forces can be written as a sum of twelve force vectors $\mathbf{F}_{ij}$ describing the force exerted on particle $i$ by the viscous friction of the spring connecting it to particle $j$.
  We have
  \begin{equation}\label{eq:Fs-friction-xy}
    \mathbf{F}_{ij} = - c_\s \frac{ \lb \dot{\bf x}_i - \dot{\bf x}_j , {\bf x}_i - {\bf x}_j \rb }{ \ell_{ij}^2 } ({\bf x}_i - {\bf x}_j)
                    = - c_\s \frac{\d\,\norm{ {\bf x}_i - {\bf x}_j}}{\d t} \hat{\bf{n}}_{ij}
  \end{equation}
  where $\hat{\mathbf{n}}_{ij}$ is the unit vector pointing from mass $j$ to mass $i$.
 The expression~\eqref{eq:Fs-friction-xy} constitutes the components of~\eqref{eq:Fs-friction-ell} with respect to the standard basis one-forms $(\d x_i,\d y_i,\d z_i)$.
 More precisely, if we denote the components of $\mathbf{F}_{ij}$ by $\mathbf{F}_{ij}^x$ $\mathbf{F}_{ij}^y$ and $\mathbf{F}_{ij}^z$, then the sum $\mathbf{F}_{ij}^x \d x_i + \mathbf{F}_{ij}^y \d y_i + \mathbf{F}_{ij}^z \d z_i$ is the one-form acting upon mass\footnote{As the one-form $\d x_i$ is independent of $\d x_j$ when $i \neq j$ we see that $F_{ij} \neq - F_{ji}$ as one-forms.} $i$, and we have $F_{ij} = \mathbf{F}_{ij}^x \d x_i + \mathbf{F}_{ji}^x \d x_j + \dots$.
 Thus, expression~\eqref{eq:Fs-friction-ell} conveniently captures the string damping force applied to the particles at both its endpoints.
  We see that the viscous friction forces oppose length change of the springs, exactly as expected.
  In any case, we can define\footnote{%
    Equations~\eqref{eq:spring-potential} and~\eqref{eq:Fs-friction-xy} (with the expression for $\ell_{ij}$ substituted) show that the system is ill-defined when ${\bf x}_i = {\bf x}_j$ for some $i \neq j$.
  This is a set of positive codimension which we shall stay away from in our analysis.%
} the force $F_\s = \sum_{i<j} F_{ij} $.

  Later in the paper we will make the rest lengths $\bar{\ell}_{ij}$ time dependent as a means to indirectly control the actual lengths of the springs.
Upon performing the substitution by functions $\bar{\ell}_{ij}(t)$, one should be careful about what kind of system is modeled by the resulting equations of motion.
In our case, one could imagine that the viscous damping is realized through the addition of dashpots being placed in parallel to the springs.

  \subsection{A regularized model of the ground}
  The ground is described by the plane $\{ z=0 \}$ in $\R^3$.
  Ideally, the ground is impenetrable and imposes a no-slip condition, mathematically represented by the constraints
  \begin{align}
    &z_i \geq 0, \label{eq:no_pen} \\
    &(\dot{x}_i,\dot{y}_i) = (0,0) \text{ if } z_i = 0 \label{eq:no_slip}
  \end{align}
    for $i=1,2,3,4$, where equation~\eqref{eq:no_pen} is the \emph{no-penetration condition} and equation~\eqref{eq:no_slip} is the \emph{no-slip condition}.
   Both conditions present challenges of a singular nature because they abruptly `turn on' at $z=0$ and are inactive otherwise.
  It is precisely this `on/off' character which we will regularize.
  To do this we will repeatedly make use of the differentiable\footnote{%
  The function $\chi$ is of class $C^1$ only.
  However, this can be dealt with by applying a smoothing mollifier concentrated around $0$.
  The width of the mollifier can be made arbitrary small, such that it does not overlap the fixed point to be found in Proposition~\ref{prop:potential-minimum}; this prevents any possible circular dependencies in size estimates later on.
  Thus, without loss of generality we may assume that the system is smooth by viewing $\chi( \cdot )$ as a proxy for a smooth function with the same behavior away from $0$.%
  } function
    \[
    	\chi(z) = \begin{cases}
		\frac{1}{2} z^2 & \text{ if } z < 0, \\
		0               & \text{ else}
		\end{cases}
    \]
  to construct forces and potentials.

  We approximate the no-penetration condition by considering a potential energy that grows rapidly for each $z_i < 0$ and is zero when $z_i \ge 0$ for $i=1,2,3,4$.
  Therefore, we define the potential energy $U_{\np}\colon Q \to \R$ by
  \begin{equation}\label{eq:floor_pot}
    U_{\np}(q) = \kappa_{\np} \sum_{i=1}^{4} \chi(z_i).
  \end{equation}
  This penalizes particles for falling through the floor and the penetration depth for a particle at rest can be controlled with $\kappa_{\np}$.
  When $\kappa_{\np}$ approaches infinity, the penetration depth goes to zero and our model approaches an exact model of a perfectly impenetrable ground.
  This can be viewed as modeling a one-sided holonomic constraint in the spirit of~\cite{Rubin57,Takens80}.
  A more advanced version of such an approach is used in~\cite{VoHaTaGr2011} to model contact problems with accurate simulations without being slaved to using infinitesimal time-step sizes.

  The no-slip condition is similar to the no-penetration condition in that it is only active at $\{ z=0 \}$.
  However, unlike the no-penetration condition, the no-slip condition is not derivable from a potential energy but instead can be viewed as a limit of viscous friction~\cite{Brendelev81,Karapetian81}.
 In particular, consider the viscous force given by
  \[
    F_{\ns}(q,\dot{q})=  - c_{\ns} \sum_{i=1}^{4}{ \chi' \left( z_i \right) ( \dot{x}_i \d x_i + \dot{y}_i \d y_i ) }.
  \]
  The force $F_{\ns}$ dampens the horizontal motion of particles in a region around $\{ z=0 \}$.
  Moreover, we can see that $F_{\ns}$ is proportional to $\d U_{\np}$, the normal force exerted by the ground.
  This is consistent with standard (first-order) assumptions about the nature of slip-friction.
  As before, the coefficient $c_{\ns}$ controls the amplitude of this force and when $c_{\ns}$ goes to infinity we arrive at a no-slip condition.

  Similarly, we dampen bouncing at the impact of a particle with the ground by
  including the viscous friction force
  \[
    F_{\db}(q,\dot{q}) =  - c_{\db} \sum_{i=1}^{4} \chi\left( z_i  \right) \dot{z}_i \d z_i.
  \]

  Finally, we incorporate gravity via the potential energy
\[
  U_{\g}(q) = \sum_{i=1}^{4} z_i
\]
 which imposes the gravitational force $-\d U_{\g}(q) = -\sum_{i=1}^{4} \d z_i$.

  \subsection{The full model}
  Now that we have established the Lagrangian of the crawler, as well as the environmental forces imposed on it, we can finally provide the equations of motion.
  These equations of motion are obtained by adding the viscous forces, $F_{\ns}$ and  $F_{\db}$, and the potential forces, $-\d U_{\np}$ and $-\d U_{\g}$, to the equations for the crawler in a vacuum.
  Adding these up into the total potential energy $U = U_\s + U_{\np} + U_{\g}$ and the total force $F = F_\s + F_{\ns} + F_{\db}$,
  the equations of motion are the Lagrange--d'Alembert equations,
  \begin{equation}
  	\frac{d}{dt} \left( \pder{L}{\dot{q}}(q,\dot{q}) \right) - \pder{L}{q}(q,\dot{q}) = F(q,\dot{q}), \label{eq:LDA}
  \end{equation}
  where $L = \frac{1}{2} k_{ij} (q) \dot{q}^i \dot{q}^j - U(q)$.
  This equation implicitly determines $\ddot{q}$ given  $q$ and $\dot{q}$.
  We can make this expression more explicit by writing it in the form
  \begin{equation}\label{eq:full-EoM}
    \frac{\d^2 q^i}{\d t^2} + \Gamma^{i}_{jk}(q) \frac{\d q^j}{\d t} \frac{\d q^k}{\d t}
    = k^{ij}(q) \big( F_j(q,\dot{q}) - \partial_j U(q) \big),
  \end{equation}
  where $k^{ij}(q)$ denotes the cometric and $\Gamma^i_{jk}(q)$ the Christoffel symbols associated to the metric $k$.
  Note that $F(q,\dot{q})$ is linear in the velocity and can be written as $F(q,\dot{q}) = -\nu(q) \cdot \dot{q}$
  for a positive semi-definite quadratic form $\nu(q)$ given by
  \begin{equation}\label{eq:viscosity_matrix}
    \nu_{ij}(q) \, \dot{q}^i \, \dot{q}^j
    := \left(\sum_{i=1}^4 c_\db \, \chi(z_i) \dot{z}_i^2
                         +c_\ns \, \chi'(z_i) (\dot{x}_i^2 + \dot{y}_i^2)\right)
      +\left(\sum_{i < j} c_\s \, \dot{\ell}_{ij}^2 \right).
  \end{equation}
  In fact we will find that $\nu$ is positive definite (see Proposition~\ref{prop:nondegenerate}, page~\pageref{prop:nondegenerate}).

\section{Analysis} \label{sec:analysis}

  In this section we prove the existence of a robustly stable equilibrium in a symmetry reduced phase space.
  To begin, we review the general process of reduction by symmetry before handling the specific case at hand.
  We reduce our system by an $\SE(2)$ symmetry to obtain a reduced vector field on the reduced phase space $\SE(2) \backslash TQ$.
  Subsequently, we prove the existence of a robustly stable equilibrium which can then be periodically perturbed to obtain a limit cycle. We reconstruct from it a relative limit cycle in the unreduced system.
  Finally, we provide some illustrative numerical results to support our claim that the reconstructed relative limit cycle typically has a non-trivial phase shift.

\subsection{Reduction by symmetry in general}
The notion of reduction by symmetry in dynamical systems is conceptually very simple.
If a system is invariant under a group of transformations, then it is, in some sense, more simple
than a system which is not invariant.

Simply put if $X:M \to TM$ is a vector field on $M$ and $G$ is a Lie group which acts on $M$,
then we say that $X$ is invariant under $G$ if $X(g \cdot x) = g \cdot X(x)$ for all $x \in M$ and $g \in G$.
Here $g$ acts on $TM$ by the tangent lift of the action on $M$.
For example if $M = \R^2$ and $G = \SO(2)$ acts on $\R^2$ by rotation about the origin,
then a vector field is $G$ invariant if it is of the form $X(r,\theta) = f_\theta(r) \pder{}{\theta} + f_r(r) \pder{}{r}$.

The quotient\footnote{%
  We implicitly always use left actions, and therefore write the group that is quotiented out on the left.%
} space $G \backslash M$ is the space of $G$-orbits.  In our example $G \backslash M$ is the space of
circles centered at the origin, which can be identified with $\mathbb{R}^+$.
In the case of our system $M = TQ$, $G = \SE(2)$ and $G \backslash M = \SE(2) \backslash TQ$ is a space which stores
the shape of the mass-spring system and its velocity, but not its position on the ground.
If $G$ acts on $M$ freely and properly, then $G \backslash M$ is a smooth manifold
and there is a smooth surjection $\Pi : M \to G \backslash M$ which sends each point $ x \in M$ to its $G$-orbit $G \cdot x \in G \backslash M$.
Moreover, if the vector field $X:M \to TM$ is $G$ invariant, then there exists a unique vector field $Y : G \backslash M \to \T(G \backslash M)$
such that $\T\Pi \cdot X = Y \circ \Pi$.

In our example, if $X = f_\theta(r) \pder{}{\theta} + f_r(r) \pder{}{r}$, then $G \backslash M = \R^+$
is coordinatized by the radius $r$ and $Y(r) = f_r(r) \pder{}{r}$.
We see that $Y$ describes dynamics on a smaller space ($G \backslash M$), yet still captures all of the richness of $X$.
Determining $Y$ from a vector field $X$ is known as \emph{reduction by symmetry}.
In the next section we will perform reduction by symmetry with respect to the group of rotations and translations of the plane.

Finally, if $x \in M$ is an equilibrium of $X: M \to TM$ and $Y: G \backslash M \to \T(G \backslash M)$ is obtained via reduction by symmetry,
then $y = \Pi(x)$ is an equilibrium of $Y$.
Moreover, the linearization of $Y$ about $y$ is related to the linearization of $X$ about $x$.

\begin{prop}\label{prop:linearization}
  Assume $G$ acts freely and properly on $M$, and let $\Pi : M \to G \backslash M$ denote the quotient projection.
  Let $x \in M$ be a fixed point of $X \in \mathfrak{X}(M)$.
  If $X$ is $G$ invariant, then $y = \Pi(x)$ is a fixed point of the reduced vector field $Y \in \mathfrak{X}(G \backslash M)$ and the linearization
  of $Y$ about $y$ is given by $\T_y Y = \T_0(\T_x \Pi) \cdot T_x X \cdot (\T_x \Pi)^{-1}_{\rm right}$,
  where $(\T_x \Pi)^{-1}_{\rm right}$ is an arbitrary right inverse to $\T_x \Pi$.
\end{prop}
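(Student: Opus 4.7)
The plan is to simply differentiate the commutative square $T\pi\circ X = [X]\circ \pi$ that defines $[X]$, and then verify that the resulting formula is well-defined. First, evaluating the square at $x$ directly gives $[X]([x]) = T_x\pi(X(x)) = T_x\pi(0_x) = 0_{[x]}$, so $[x]$ is a fixed point of $[X]$, as claimed.

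Next, I would apply the tangent functor to both sides of $T\pi\circ X = [X]\circ\pi$ at the point $x$. By the chain rule this yields
\[
T_{0_x}(T\pi) \circ T_xX = T_{[x]}[X] \circ T_x\pi .
\]
Since $\pi$ is a surjective submersion (being the projection of a principal $G$-bundle), $T_x\pi$ is surjective and therefore admits a right inverse. Right-composing the identity above with any such right inverse produces the claimed formula for $T_{[x]}[X]$.

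The substantive step is then to verify that this formula is independent of the choice of right inverse, i.e.\ that $T_{0_x}(T\pi)\circ T_xX$ annihilates $\ker T_x\pi$. This kernel consists of vectors tangent to the $G$-orbit through $x$, so every $v\in\ker T_x\pi$ can be represented as $c'(0)$ for a curve $c(t)$ lying in the orbit. The $G$-invariance of $X$, combined with $X(x)=0$, then forces $X(g\cdot x) = g\cdot 0_x = 0_{g\cdot x}$ for every $g\in G$, and hence $X\circ c \equiv 0$. Consequently $T_xX\cdot c'(0)$ is tangent to the zero section of $TM$ along $c$, and $T\pi$ sends this to the velocity of the constant curve $t\mapsto 0_{[x]}$, which is zero.

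This third step is the only place where $G$-invariance is actually used, and it is the main conceptual obstacle; the other two steps are purely formal. The remaining bookkeeping, namely the canonical identification of $T_{0_x}(TM)$ with $T_xM\oplus T_xM$ that reduces the stated formula to the usual linearization of a vector field at a fixed point, can be left implicit since it plays no role in the argument.
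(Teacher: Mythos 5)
Your proposal is correct and takes essentially the same route as the paper: differentiate the defining relation $T\pi\circ X=[X]\circ\pi$ at the fixed point and compose with a right inverse of $T_x\pi$, with $G$-invariance ensuring that the orbit directions $\ker T_x\pi$ cause no ambiguity. The only minor difference is how that kernel fact is obtained — you note that invariance forces $X$ to vanish along the whole $G$-orbit (so $T_xX$ composed with $T_{0_x}(T\pi)$ kills orbit tangents), whereas the paper's Lemma~\ref{lem:kernel-subset} derives the same statement from the flow fixing each point of the orbit; moreover, in your setup the independence of the choice of right inverse already follows formally from the chain-rule identity, so your separate verification, while correct, is strictly speaking redundant.
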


\begin{lem}\label{lem:kernel-subset}
  Assume the setup of Proposition~\ref{prop:linearization}.
  Then the kernel of $\T_x \Pi$ is a subset of the kernel of $\T_x X\colon \T_x M \to \T_0(\T_x M)$.
\end{lem}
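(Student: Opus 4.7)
The plan is to exploit the $G$-invariance of $X$ to show that every vector in $\ker T_x\pi$ is tangent to a curve consisting entirely of fixed points of $X$, which forces it to lie in the kernel of the linearization $T_x X$.

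First I would identify $\ker T_x\pi$ concretely. Because $G$ acts freely and properly, the orbit map $\mathrm{orb}_x\colon g \in G \mapsto g\cdot x \in M$ is an embedding, and its image is the fiber $\pi^{-1}([x]) = G\cdot x$. The kernel of $T_x\pi$ is exactly the tangent space $T_x(G\cdot x)$ to this orbit, which is spanned by the infinitesimal generators $\xi_M(x) = \tfrac{d}{dt}\big|_{t=0}\exp(t\xi)\cdot x$ for $\xi \in \mathfrak{g}$. So it suffices to show $T_x X \cdot \xi_M(x) = 0$ for every $\xi \in \mathfrak{g}$.

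Next I would use the invariance identity $X(g\cdot x) = g\cdot X(x)$ with $g = \exp(t\xi)$. Since $x$ is a fixed point we have $X(x) = 0_x$, and the lifted $G$-action on $TM$ acts on the zero section as $g\cdot 0_x = 0_{g\cdot x}$. Hence along the curve $\gamma(t) := \exp(t\xi)\cdot x$ one has $X(\gamma(t)) = 0_{\gamma(t)}$ for all $t$, i.e.\ $\gamma$ is an entire curve of fixed points of $X$. Differentiating $X\circ\gamma$ at $t=0$ gives $T_x X \cdot \gamma'(0) = T_x X\cdot \xi_M(x)$ on one side, and on the other side it is the derivative of the zero section along $\gamma$, which is a purely ``horizontal'' tangent vector in $T_{0_x}TM$, lying in the image of $T_x(0\text{-section})$ and therefore having vanishing vertical component.

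Finally I would reconcile this with the codomain of $T_x X$ as written in the statement. Under the canonical identification $T_0(T_x M) \cong T_x M$ given by the vector space structure on the fiber, the map $T_x X\colon T_x M \to T_0(T_x M)$ in Proposition~\ref{prop:linearization} is the vertical (fiber) component of the tangent map of the section $X\colon M \to TM$ at the fixed point $x$ -- this is what makes the formula $T_{[x]}[X] = T_0(T_x\pi)\cdot T_x X \cdot (T_x\pi)_{\mathrm{right}}^{-1}$ consistent. The computation above shows precisely that this vertical component vanishes on $\xi_M(x)$, giving $\xi_M(x) \in \ker T_x X$ and hence $\ker T_x\pi \subseteq \ker T_x X$. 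The only subtlety is getting the identifications in $T_{0_x}TM$ straight; there is no real analytic obstacle.
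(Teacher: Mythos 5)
Your argument is correct, but it follows a genuinely different route from the paper's. The paper proves the lemma through the flow: using equivariance $\Phi^X_t(g\cdot x) = g\cdot\Phi^X_t(x)$ and the fixed-point property $\Phi^X_t(x)=x$, it shows that $T_x\Phi^X_t$ restricts to the identity on the orbit directions for every $t$, and then differentiates in $t$ to conclude that the linearization annihilates $\ker T_x\pi$. You instead stay at the level of the vector field itself: equivariance $X(g\cdot x)=g\cdot X(x)$ together with $X(x)=0_x$ and linearity of the lifted action on fibers shows that the whole orbit $\exp(t\xi)\cdot x$ is a curve of equilibria, and differentiating the section $X$ along this curve lands in the image of the zero section, whose vertical component vanishes. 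Your route has the advantage of not invoking the flow at all (no equivariance of $\Phi^X_t$, no interchange of the $\varepsilon$- and $t$-derivatives), and it makes explicit the point the paper leaves implicit, namely that the map $T_xX\colon T_xM\to T_0(T_xM)$ is the vertical part of the tangent map of the section $X\colon M\to TM$ with respect to the canonical splitting of $T_{0_x}(TM)$ at the zero section, which is exactly what makes the formula in Proposition~\ref{prop:linearization} well defined. The paper's route, on the other hand, avoids the second tangent bundle and its splitting altogether, working only with curves in $M$ and the already-established equivariance of the flow, which fits the commutative diagrams it has just set up. Both arguments rely essentially on $x$ being a fixed point, and both identify $\ker T_x\pi$ with the tangent space to the group orbit, so the content is the same; only the differentiation step is organized differently.
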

\begin{proof}
  Let $\Phi_t^X:M \to M$ denote the flow of the vector field $X$.
  As a consequence of~\cite[Prop.~4.2.4]{MTA} we know that $\Phi_t^X$ is $G$-invariant when $X$ is $G$-invariant.
  If $\delta x \in \T_x M$ is in the kernel of $\T_x\Pi$ then it must be of the form $\delta x = \left. \frac{\d}{\d\varepsilon} \right|_{\varepsilon = 0} g_\varepsilon \cdot x$ for some curve $g_\varepsilon \in G$ which originates at $g_0 = \textrm{id}$.
  We find
  \begin{align*}
    \T_{x} \Phi_t^{X} ( \delta x)
    &:= \frac{\d}{\d \varepsilon } \Big|_{\varepsilon = 0} \Phi_t^{X}( g_\varepsilon \cdot x )
      = \frac{\d}{\d \varepsilon } \Big|_{\varepsilon = 0} g_\varepsilon \cdot \Phi_t^{X}( x )
      = \frac{\d}{\d \varepsilon } \Big|_{\varepsilon = 0} g_\varepsilon \cdot x = \delta x.
  \end{align*}
  Therefore, $\T_x \Phi^X_t$ is the identity on the subspace of $\T_x M$ tangent to a $G$-orbit.
  Taking the time derivative we find that $\T_x X$ must evaluate to $0$ on the subspace of $\T_x M$ tangent to a $G$-orbit.
\end{proof}

\begin{proof}[Proof of Proposition~\ref{prop:linearization}]
  By the commutative relation between $X$ and $Y$ above we observe that $y = \Pi(x) \in G \backslash M$ is a fixed point of  $Y$.
  As $\T_x\Pi$ is surjective, we may define the formal inverse $(\T_x \Pi)^{-1}\colon \T_{\Pi(x) }(G \backslash M) \to \frac{\T_x M}{ \ker(\T_x \Pi) }$.
  By Lemma~\ref{lem:kernel-subset}, $\ker(\T_x \Pi) \subset \ker(\T_x X)$, so that $\T_0(\T_x \Pi) \cdot \T_x X \cdot (\T_x \Pi)^{-1}$ is a well-defined map from $\T_{\Pi(x)} G \backslash M \to \T_0(\T_{\Pi(x)} G \backslash M)$.
  In other words $\T_{\Pi(x)} Y = \T_0(\T_x \Pi) \cdot \T_x X \cdot (\T_x \Pi)^{-1}$.
  We may now replace the formal inverse, $\T_x \Pi^{-1}$, with an arbitrary right inverse, $(\T_x \Pi^{-1})_{\rm right}$ to conclude the proof.
\end{proof}

\subsection{Reduction by \texorpdfstring{$\SE(2)$}{SE(2)}}
\label{sec:reduction}
  The group $\SE(2)$ consists of all isometries of the plane, i.e.\ rotations and translations of $\mathbb{R}^2$.
  Elements of $\SE(2)$ are given by an angle, $\Theta \in S^1$, and a translation vector $\Delta = (\Delta_x,\Delta_y) \in \mathbb{R}^2$.
  We consider the action of $\SE(2)$ on $Q$ that translates and rotates the $(x,y)$ coordinates of each of the masses.
  That is, we define the action
  \begin{align*}
    (\Theta, \Delta ) \cdot ({\bf x}_1,\dots,{\bf x}_4) = \big( (\Theta,\Delta) \cdot {\bf x}_1, \dots, (\theta,\Delta) \cdot {\bf x}_4 \big)
  \end{align*}
  where
  \begin{align*}
  	(\theta,\Delta) \cdot {\bf x}_i :=
		\begin{pmatrix}
			\cos(\Theta) x_i - \sin(\Theta) y_i + \Delta_x\\
			\sin(\Theta) x_i + \cos(\Theta) y_i + \Delta_y\\
			z_i
		\end{pmatrix}.
  \end{align*}
  
  Note that $\SE(2)$ does \emph{not} act freely on all of $\R^{12}$: if all masses ${\bf x}_1,\dots,{\bf x}_4$
  have the same $(x,y)$ coordinates, then $q$ is fixed by the isometries which rotate the plane about $(x,y)$.\footnote{
  	This is called \emph{the stabilizer subgroup} of the point $(x,y)$, denoted $\SE(2)_{(x,y)}$.
  }
  We thus take the open subset $Q \subset \R^{12}$ with these configurations excluded as our configuration manifold.
  Specifically, if we let $p_{xy}:\mathbb{R}^3 \to \mathbb{R}^2$ be the projection onto the $xy$-plane, then
  \begin{align*}
    Q := \left\{ ({\bf x}_1,\dots,{\bf x}_4) \in (\R^3)^4 \,\middle|
	\begin{array}{l}
		{\bf x}_i \neq {\bf x}_j \text{ for all $i \neq j$} \\
		p_{xy}({\bf x}_i) \neq p_{xy}({\bf x}_j) \text{ for some $i \neq j$}
	\end{array} \right\}
  \end{align*}
  and we find:
  \begin{prop}
    The action of $\SE(2)$ on $Q$ is free and proper.
  \end{prop}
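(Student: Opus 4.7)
The plan is to prove the two conditions separately. Freeness follows essentially from the fact that orientation-preserving planar isometries have few fixed points, and properness reduces to compactness of $\SO(2)$ together with the boundedness of translation parts along any convergent sequence of configurations.

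For freeness, I would take $q \in Q$ and $g \in \SE(2)$ with $g \cdot q = q$, and aim to deduce $g = e$. Writing $g = (R, t) \in \SO(2) \ltimes \R^2$ and using that the action only affects the planar parts, the hypothesis means $R \cdot p_{xy}(q_i) + t = p_{xy}(q_i)$ for every $i = 1, \dots, 4$. The definition of $Q$ guarantees that at least two of the planar projections $p_{xy}(q_i)$ and $p_{xy}(q_j)$ are distinct. Any orientation-preserving isometry of $\R^2$ that fixes two distinct points must be the identity: if $R \neq I$, then $R$ has only one fixed point in $\R^2$, contradicting the existence of two fixed points; while if $R = I$, the equation forces $t = 0$.

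For properness, I would invoke the sequential criterion: given sequences $q_n \to q$ in $Q$ and $g_n = (R_n, t_n) \in \SE(2)$ with $g_n \cdot q_n \to q' \in Q$, produce a convergent subsequence of $(g_n)$. Since $\SO(2)$ is compact, after passing to a subsequence we may assume $R_n \to R_\infty \in \SO(2)$. Fix any index $i$; then $p_{xy}(q_i^n) \to p_{xy}(q_i)$ is bounded, and $R_n p_{xy}(q_i^n) + t_n \to p_{xy}(q_i')$ is bounded, so $t_n$ stays bounded and a further subsequence satisfies $t_n \to t_\infty$. Hence $g_n \to (R_\infty, t_\infty) \in \SE(2)$, proving properness.

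The only subtle point is where the exclusion of the degenerate configurations from $Q$ is actually used: it enters through freeness, not properness, since the properness argument above works for the full $\R^{12}$ as well. This matches the earlier remark that the stabilizer $\SE(2)_{(x,y)}$ is non-trivial exactly when all four planar projections coincide. I do not anticipate any genuine obstacle; the whole argument is a routine verification that the standard proper and free isometric action of $\SE(2)$ on $\R^2$ lifts to a proper and free action on the subset of $(\R^2)^4 \times \R^4 \cong \R^{12}$ where the four planar positions are not all equal.
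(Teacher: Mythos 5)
Your proof is correct. The freeness part is essentially the paper's argument (the paper merely cites as well-known that an orientation-preserving planar isometry fixing two distinct points is the identity, which you spell out via the invertibility of $I-R$ for $R\neq I$ in $\SO(2)$). For properness you take a genuinely different route: you verify the sequential criterion for proper actions, getting a convergent subsequence of $g_n=(R_n,t_n)$ from compactness of $\SO(2)$ plus boundedness of the translations $t_n = \bigl(R_n p_{xy}(q_i^n)+t_n\bigr)-R_n p_{xy}(q_i^n)$, whereas the paper proves properness of $A(g,q)=(g\cdot q,q)$ by exhibiting an explicit continuous inverse on its image, reading off $(X,Y)=p_{xy}(q_1'-q_1)$ and $\Phi$ as the angle between $p_{xy}(q_2'-q_1')$ and $p_{xy}(q_2-q_1)$. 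Your argument is the more standard and robust one: it requires no case distinction over which pair of masses has distinct planar projections, and, as you correctly observe, it establishes properness on all of $\R^{12}$, the set $Q$ being needed only for freeness; you should just note that $(R_\infty,t_\infty)\in\SE(2)$ since $\SO(2)\times\R^2$ is closed, and that the sequential criterion is equivalent to properness of $A$ here because all spaces are metrizable and locally compact. The paper's approach, in exchange, produces an explicit formula recovering the group element from a pair of configurations in the same orbit (in the spirit of its later reconstruction formulas), at the cost of some delicacy your version avoids, namely continuity of that inverse across different admissible choices of the reference pair and the closedness of $\operatorname{Im}(A)$ needed to pass from a continuous inverse on the image to properness of $A$ itself.
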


  \begin{proof}
    The action is free if $(\Theta, \Delta)\cdot q = q$ implies that $(\Theta,\Delta)  = (0 , \vec{\bf 0})$.
    Since the collection of $(x,y)$ coordinates of the points ${\bf x}_i$
    are prohibited from completely overlapping, it must be the case that $(\Theta , \Delta)$
    fixes a non-degenerate line segment in the plane.
    The only such isometry which satisfies this constraint is the identity, $(0,\vec{\bf 0})$.
    
    To prove that the action is proper, we have to show that
    \[
      A\colon\SE(2) \times Q \to Q \times Q\colon
      (g,q) \mapsto (g \cdot q,q)
    \]
    is a proper continuous map, see~\cite[p.~53]{DuKo2000}.
    We shall do so by proving that $A$ has a continuous inverse, defined on its image.
    Let $(q',q) \in \operatorname{Im}(A)$ and without loss of generality assume that $p_{xy}({\bf x}_1) \neq p_{xy}({\bf x}_2)$.

    Then $A^{-1}(q',q) = \big((\Theta,\Delta),q\big)$ where
    $\Theta = \angle\big(p_{xy}({\bf x}'_2 - {\bf x}'_1),p_{xy}({\bf x}_2 - {\bf x}_1)\big)$
    and $\Delta = p_{xy}(R(-\Theta) \cdot {\bf x}'_1 - {\bf x}_1)$.
    Firstly, the angle $\Theta$ depends continuously on the arguments
    vectors, since these have non-zero lengths. Secondly, the
    translation $\Delta$ depends continuously on $\Theta$ and the
    other arguments, where $R(\alpha)$ denotes the matrix of rotation
    over an angle $\alpha$.
  \end{proof}
  
  As this action is free and proper we can assert that the quotient space, $\SE(2) \backslash Q$, is a manifold, and $\pi:Q \to \SE(2) \backslash Q$ is a principal bundle.
    In order to understand the principal bundle structure of $Q$
    it is useful to find a coordinate system in which the map $\pi$ is a Cartesian projection.
    Let us consider the (local) coordinates $(\ell , Z ,\theta , x , y )$ where
    \begin{equation*}\label{eq:local-coords}
      \begin{alignedat}{2}
        \ell   &= (\ell_{12},\dots,\ell_{34}) \in (\R^+)^6,\quad &
        Z      &= (z_1,z_2,z_3), \\
        \theta &= \angle\big( (x_2-x_1,y_2-y_1) , (1,0) \big),\quad &
        (x,y)  &= \frac{1}{4} \sum_{i=1}^4 (x_i,y_i).
      \end{alignedat}
   \end{equation*}
   In words, $(x,y)$ is the average of the mass positions in the plane
   and $\theta$ is the angle between the line segment from $(x_1,y_1)$ to $(x_2,y_2)$
   and the $x$-axis.
   In these coordinates the action of $(\Theta , \Delta ) \in \SE(2)$ is given by
   \begin{align*}
     (\ell,Z,\theta,x,y) \mapsto (\ell,Z,\theta + \Theta,
     	\begin{pmatrix}
		\cos(\Theta) x - \sin(\Theta) y + \Delta_x \\
		\sin(\Theta) x + \cos(\Theta) y + \Delta_y		
	\end{pmatrix}).
   \end{align*}
   These coordinates locally trivialize $Q \cong S \times \SE(2)$ as a principal $\SE(2)$ bundle
   in that the quotient projection $\pi$ simply drops the last three coordinates $(\theta,x,y)$, and the space $S = \SE(2) \backslash Q$ is
   a nine-dimensional space with coordinates $s = (\ell ,Z)$.

  The action on $Q$ naturally lifts to a free and proper action on the tangent bundle, $TQ$, given by
  \begin{align*}
  	(\Theta , \Delta) \cdot ( ({\bf x}_1,{\bf \dot{x}}_1) , \dots , ({\bf x}_4,{\bf \dot{x}}_4) )
    := ( (\Theta , \Delta) \cdot ({\bf x}_1,{\bf \dot{x}}_1) , \dots, (\Theta , \Delta) \cdot ({\bf x}_4,{\bf \dot{x}}_4) )
  \end{align*}
  where
  \begin{align*}
  	(\Theta , \Delta) \cdot ({\bf x}_i , {\bf \dot{x}}_i) := \Bigg(
    \begin{pmatrix}
			\cos(\Theta) x_i - \sin(\Theta) y_i + \Delta_x \\
			\sin(\Theta) x_i + \cos(\Theta) y_i + \Delta_y \\
			z_i
		\end{pmatrix}
		,
		\begin{pmatrix}
			\cos(\Theta) \dot{x}_i - \sin(\Theta) \dot{y}_i \\
			\sin(\Theta) \dot{x}_i + \cos(\Theta) \dot{y}_i \\
			\dot{z}_i
		\end{pmatrix}
		\Bigg).
  \end{align*}
  
  As before, we find that $P:= \SE(2) \backslash TQ$ is a smooth manifold
  and we obtain a (left) $\SE(2)$ principal bundle $\Pi : TQ \to P$.\footnote{
    The reader should keep in mind that $P \neq \T(\SE(2) \backslash Q)$.
  }
  Also as before, in order to understand the principal bundle projection, $\Pi$,
  it is useful to use a coordinate system where $\Pi$ is trivial.
  Consider the coordinate system $(\ell,Z,\theta,x,y, \dot{\ell},\dot{Z},\dot{\theta},a,b)$
  where $\dot{Z}$, $\dot{\ell}$, and $\dot{\theta}$ denote velocities in the $\ell$, $Z$, and $\theta$ ``coordinate directions'',
  and
  \begin{align*}
  	\begin{pmatrix} a \\ b 
	\end{pmatrix} :=
	\begin{pmatrix}
    \hphantom{-}\cos(\theta) & \sin(\theta) \\
              - \sin(\theta) & \cos(\theta)
	\end{pmatrix}
	\begin{pmatrix}
		\dot{x} \\ \dot{y}
	\end{pmatrix}.
  \end{align*}
  These are moving frame coordinates, and the coordinates $(a,b)$ are sometimes called `pseudo-coordinates' since they are not induced by coordinates on $Q$.
  In terms of the local trivialization $Q = S \times \SE(2)$, we see that $(\ell,Z,\dot{\ell},\dot{Z})$ form standard induced coordinates on $S$ and $(\theta,x,y,\dot{\theta},a,b)$ are moving frame coordinates on $\SE(2)$ induced by left-trivialization of $\T \SE(2) \cong \SE(2) \times \mathfrak{se}(2)$.

  In these coordinates the left action of $\SE(2)$ on $TQ$ is naturally given by
  \begin{align*}
  	(\Theta, \Delta) \cdot
		\begin{pmatrix}
			\ell \\ Z \\ \theta \\ x \\ y \\ \dot{\ell} \\ \dot{Z} \\ \dot{\theta} \\ a \\ b
		\end{pmatrix} 
		= 
		\begin{pmatrix}
					\ell \\ Z \\ 
					\theta + \Theta \\
					\cos(\Theta) x - \sin(\Theta) y + \Delta_x \\
					\sin(\Theta) x + \cos(\Theta)y + \Delta_y \\
					\dot{\ell} \\ \dot{Z} \\
					\dot{\theta} \\ a \\ b
		\end{pmatrix}.
  \end{align*}
  We can immediately see that the quotient projection $\Pi$ merely projects out the $\theta,x,y$ coordinates,
  i.e.
   \begin{equation*}\label{eq:Pi}
  	\Pi(\ell, Z, \theta, x, y, \dot{\ell}, \dot{Z}, \dot{\theta}, a, b)
    =  (\ell, Z,               \dot{\ell}, \dot{Z}, \dot{\theta}, a, b).
  \end{equation*}
  \begin{rmk}
  On the open subset of $TQ$ where the coordinates $(\ell,Z,\theta,x,y)$ are valid,
  the map $(\ell,Z,\theta,x,y,\dot{\ell},\dot{Z},\dot{\theta},\dot{x},\dot{y}) \mapsto (\dot{\theta},a,b)$ is a principal connection
  if we identify $(\dot{\theta},a,b)$ as an element of $\mathfrak{se}(2)$.
  However, unlike the mechanical or Stokes connections, this map is not derived from physical properties of the system.
  It is merely derived from a non-canonical choice of coordinates that locally trivialize the principal bundle $Q$.
  \end{rmk}
  Recall that $TQ$ is a vector bundle over $Q$.
  The coordinates of $Q$ can be given by $(\ell,Z,\theta,x,y)$ and fibers of $TQ$ are parametrized by the coordinates $(\dot{\ell},\dot{Z},\dot{\theta},a,b)$.
  Similarly,
  the principal bundle $P$ is a vector bundle with base coordinates $(\ell,Z)$ and fibers coordinates $(\dot{\ell},\dot{Z},\dot{\theta},a,b)$.
  We see that $\Pi:TQ \to P$ is linear in the fiber coordinates (in fact it is the identity on the fibers with respect to these coordinates), and we could say that $P$ inherits the vector bundle structure of $TQ$ through the map $\Pi$.
  We denote by $P^*$ the vector bundle dual to $P$; this dual vector bundle will come into play shortly.
  
  Now that we understand $P$,
  we wish to assert the existence of a unique dynamical system on $P$
  which is consistent with the dynamical system on $TQ$ given by~\eqref{eq:full-EoM}.
  
  Note that~\eqref{eq:full-EoM} is written in terms of the total potential energy $U$ and the total dissipative force $F$.
  We observe that $U$ is $\SE(2)$ invariant because $U_g = U_g(Z)$, $U_\np = U_\np(Z)$, and $U_\s = U_\s(\ell)$.
  As a result, there exists a unique reduced potential $\widehat{U}\colon \SE(2) \backslash Q \to \R$ such that $U = \widehat{U} \circ \pi$.
  It is easy to believe that the differential $\d U: Q \to \T^*Q$ which appears in~\eqref{eq:full-EoM} must be $\SE(2)$ invariant as well;
  to understand this invariance we must consider how $\SE(2)$ acts upon $\T^*Q$.
  
    In a natural sense, the left action of $\SE(2)$ on $TQ$ induces a right action on $\T^*Q$.
    In standard Cartesian coordinates for the masses $(x_1,y_1,z_1,\dots,x_4,y_4,z_4)$ we may consider
  the fiber coordinates $(p_{x_1},p_{y_1},p_{z_1},\dots,p_{x_4},p_{y_4},p_{z_4})$ on $\T^*Q$,
  in which case the action is given by 
  \begin{align*}
		(\Theta, \Delta)^* \cdot
		\begin{pmatrix}
			x_i \\ y_i \\ z_i \\
			p_{x_i} \\ p_{y_i} \\ p_{z_i}
		\end{pmatrix}
		=
		\begin{pmatrix}
			\hphantom{-}\cos(\Theta) (x_i -\Delta_x) + \sin(\Theta) (y_i - \Delta_y) \\
			          - \sin(\Theta) (x_i -\Delta_x) + \cos(\Theta) (y_i - \Delta_y) \\
			z_i \\
			\hphantom{-}\cos(\Theta) p_{x_i} + \sin(\Theta) p_{y_i} \\
			          - \sin(\Theta) p_{x_i} + \cos(\Theta) p_{y_i} \\
			p_{z_i}
		\end{pmatrix}
  \end{align*}
  for $i=1,\dots,4$.  
  We may also consider this action in terms of the coordinates $(\ell,Z,\theta,x,y,p_\ell , p_Z, p_\theta, \bar{a} , \bar{b} )$
  where $(p_\ell,p_Z,p_\theta,\bar{a},\bar{b})$ are fiber coordinates conjugate to
  the fiber coordinates $(\dot{\ell},\dot{Z},\dot{\theta},a,b)$ on $TQ$.
  The $\SE(2)$ action on $\T^*Q$ is expressed in these coordinates as
  \begin{align*}
		(\Theta , \Delta )^* \cdot
		\begin{pmatrix}
			\ell \\ Z \\ \theta \\ x \\ y \\ p_\ell \\ p_Z \\ p_\theta \\ \bar{a} \\ \bar{b}
		\end{pmatrix}
		=
		\begin{pmatrix}
			\ell \\ Z \\ \theta - \Theta \\ \hphantom{-} \cos(\Theta) (x-\Delta_x) + \sin(\Theta) (y-\Delta_y) \\ -\sin(\Theta) (x-\Delta_x) + \cos(\Theta) (y-\Delta_y) \\ p_\ell \\ p_Z \\ p_\theta \\ \bar{a} \\ \bar{b}
		\end{pmatrix}	
  \end{align*}
  
  We say that $\d U$ is invariant if
  $(\Theta, \Delta)^* \cdot \d U( (\Theta , \Delta) \cdot q) = \d U(q)$
  for any $(\Theta,\Delta) \in \SE(2)$ and $q \in Q$.
  In other words, $\d U$ is invariant if the following diagram commutes
  \[
  \begin{tikzcd}
  \T^*Q & \arrow{l}{g^*} \T^*Q \\
  Q \arrow{u}{\d U} \arrow{r}{g} & Q  \arrow{u}{\d U}
  \end{tikzcd}
  \]
  for any $g \in \SE(2)$.
    In $(\ell,Z,x,y,\theta,p_\theta,\bar{a},\bar{b})$ coordinates $\d U:Q \to \T^*Q$ takes the form
  \begin{align*}
    \d U(\ell,Z,x,y,\theta) = \left(\ell,Z,x,y,\pder{U}{\ell}, \pder{U}{Z} , 0 ,0 \right).
  \end{align*}
  As $U$ is only a function of $\ell$ and $Z$ we see that $\pder{U}{\ell}$ and $\pder{U}{Z}$
  are only functions of $\ell$ and $Z$ as well.
  The group $\SE(2)$ acts trivially on the variables $\ell$ and $Z$
  and therefore we find
  \begin{align*}
    \d U( (\Theta , \Delta) \cdot (\ell,Z,x,y,\theta) ) =
		\begin{pmatrix}
			\ell \\ Z \\
      \theta + \Theta \\
			\cos(\Theta) x-\sin(\Theta)y \\
			\sin(\Theta) x+\cos(\Theta)y \\
			\partial U / \partial \ell \\
			\partial U / \partial Z  \\
			 0 \\ 
			 0
		\end{pmatrix}
  \end{align*}
  Applying $(\Theta, Z)$ to this we indeed verify that $\d U$ is $\SE(2)$ invariant.
  This invariance implies the existence of a unique map $\widehat{\d U}: \SE(2) \backslash Q \to P^*$,
  explicitly given in $P^*$ coordinates $(\ell,Z,p_\ell,p_Z,\bar{a},\bar{b})$ by
  \begin{align*}
    \widehat{\d U}(\ell,Z) = \left(\ell , Z , \pder{U}{\ell} , \pder{U}{Z} , 0 , 0 \right).
  \end{align*}

  Now that we have verified the invariance of $\d U$, we must do the same for the dissipative force $F:TQ \to \T^*Q$.
  If $F$ is invariant under the $\SE(2)$ action, then we should find that $(\Theta , \Delta)^* \cdot F( (\Theta, \Delta)  \cdot (q,\dot{q})) = F(q,\dot{q})$
  for all $(\Theta, \Delta) \in \SE(2)$ and $(q,\dot{q}) \in TQ$.
  In other words, $F$ is invariant if the following diagram commutes
   \[
  \begin{tikzcd}
  \T^*Q & \arrow{l}{g^*} \T^*Q \\
  TQ \arrow{u}{F} \arrow{r}{g} & TQ  \arrow{u}{F}
  \end{tikzcd}
  \]
  for any $g \in \SE(2)$.
  Intuitively, it is obvious that $F_\s$ and $F_\db$ are invariant
  because they are only functions of $\ell,Z,\dot{\ell}$ and $\dot{Z}$, upon which $\SE(2)$ acts trivially.
  The force $F_\ns$ is more subtle to analyze.
  We find that
  \begin{align*}
    &F_\ns ( (\Theta, \Delta) \cdot (q,\dot{q}) ) \\
    &\quad = c_\ns \sum_{i=1}^4 \chi'(z_i) \Big[
        \big( \cos(\Theta) \dot{x}_i - \sin(\Theta) \dot{y}_i \big) \d x_i
       +\big( \sin(\Theta) \dot{x}_i + \cos(\Theta) \dot{y}_i \big) \d y_i \Big]
  \end{align*}
  so that
  \begin{align*}
	&(\Theta, \Delta)^* \cdot F_\ns \big( (\Theta, \Delta) \cdot (q,\dot{q}) \big) \\
	&\quad = c_\ns \sum_{i=1}^4 \chi'(z_i)
     \begin{aligned}[t]
 \Big(&\big[   \cos(\Theta) (\cos(\Theta) \dot{x}_i - \sin(\Theta) \dot{y}_i)
             + \sin(\Theta) (\sin(\Theta) \dot{x}_i + \cos(\Theta) \dot{y}_i) \big] \d x_i \\
	   +&\big[{-}\sin(\Theta) (\cos(\Theta) \dot{x}_i - \sin(\Theta) \dot{y}_i)
             + \cos(\Theta) (\sin(\Theta) \dot{x}_i + \cos(\Theta) \dot{y}_i) \big] \d y_i
 \Big)\end{aligned} \\
	&\quad = c_\ns \sum_{i=1}^4 \chi'(z_i) \big(\dot{x}_i \d x_i + \dot{y}_i \d y_i \big) = F_\ns(q,\dot{q}).
  \end{align*}
  Thus $F_\ns$ is $\SE(2)$ invariant and therefore the total force $F$ is $\SE(2)$ invariant.
  An equivalent statement of $F_\ns$ being invariant would be that $\SE(2)$ acts by isometries with respect to the metric $\nu$.
  In any case, invariance of $F$ implies the existence of a unique map $\widehat{F}:P \to P^*$
  such that $ \langle \widehat{F}( \Pi( q,\dot{q}) ) , \Pi(q,v) \rangle = \langle F(q,\dot{q}) , (q,v) \rangle$ for any $(q,\dot{q}) , (q,v) \in TQ$.
  
  If we let $\xi = (\dot{\ell},\dot{Z},\dot{\theta},a,b)$ denote the fiber coordinates of $P$, we find that $\widehat{F}$ is of the form
  \begin{align*}
    \widehat{F}( \ell , Z , \xi )_i= -\hat{\nu}_{ij}(\ell , Z) \cdot \xi^j
  \end{align*}
  for some positive definite quadratic form $\hat{\nu}(\ell,Z)$ which is linearly related to $\nu(q)$  by an outer automorphism.
  In particular, $\hat{\nu}(s)$ is related to $\nu(q)$ by $\nu(q) (v_q,w_q) = \hat{\nu}(s) ( D_q \Pi(v_q) , D_q \Pi(w_q) )$.
  Locally, we may write $\nu(q)$ and $\hat{\nu}(s)$ as matrices, and the above relation takes the form of $\nu(q) = [D_q\Pi]^T \hat{\nu}(s) [D_q\Pi]$.
  The same analysis applied to the metric $k$ yields a fiber-wise quadratic form $\hat{k}$ on $P$ whose components $\hat{k}_{ij}(\ell,Z)$ only depend on the
  shape variables $(\ell,Z)$.
  The reduced Lagrangian $\widehat{L}:P \to \mathbb{R}$ can now be defined by the relation $\widehat{L} \circ \Pi = L$ and takes the form
  \begin{align*}
  	\widehat{L}(\ell,Z,\dot{\ell},\dot{Z},\dot{\theta},a,b) = \frac{1}{2} \hat{k}_{ij}(\ell,Z) \xi^i \xi^j - \widehat{U}(\ell,Z)
  \end{align*}
  If we group the coordinates as $s = (\ell,Z)$, $\dot{s} = (\dot{\ell},\dot{Z})$, and $\eta = (\dot{\theta},a,b) \in \mathfrak{se}(2)$, then 
  we may define the block-structure for $\hat{\nu}(s)$ given by
  \begin{align*}
  	\hat{\nu}(s) \cdot (\dot{s} , \eta) =
		\begin{bmatrix}
			\hat{\nu}_{ss} (s) & \hat{\nu}_{s\eta}(s) \\
			\hat{\nu}_{\eta s} (s) & \hat{\nu}_{\eta\eta}(s)
		\end{bmatrix}
		\begin{bmatrix}
			\dot{s} \\ 
			\eta
		\end{bmatrix}.
  \end{align*}
  The reduced equation of motion are then given by
  \begin{align}
    &\frac{\d}{\d t} \left( \pder{ \widehat{L}}{\dot{s}}(s,\dot{s},\eta) \right) - \pder{\widehat{L}}{s} (s,\dot{s},\eta)= -\hat{\nu}_{ss}(s) \, \dot{s} - \hat{\nu}_{s\eta}(s) \, \eta \label{eq:horizontal_LP} \\
    &\frac{\d}{\d t} \left( \pder{ \widehat{L}}{\eta}(s,\dot{s},\eta) \right) -\ad^*_\eta \left( \pder{\widehat{L}}{\eta} (s,\dot{s},\eta) \right)= -\hat{\nu}_{\eta s}(s) \, \dot{s} - \hat{\nu}_{\eta\eta}(s) \, \eta \label{eq:vertical_LP}
  \end{align}
  where $\ad^*_\eta: \mathfrak{se}(2)^* \to \mathfrak{se}(2)^*$ denotes the coadjoint action\footnote{
  	There are two conventions for the coadjoint action, and they differ by a minus sign.
	In this article, $\ad^*_\xi$ is defined as the dual of $\ad_\xi: \mathfrak{se}(2) \to \mathfrak{se}(2)$.
  }	
  of $\eta \in \mathfrak{se}(2)$ on $\mathfrak{se}(2)^*$.
  The appearance of the coadjoint action arises from the fact that $\eta$ is the $\mathfrak{se}(2)$ component of the local trivialization $TQ \cong TS \times \SE(2) \times \mathfrak{se}(2)$
  where $S = \SE(2) \backslash Q$.
  This local trivialization induces moving frame coordinates, and the equations of motion are altered.
  A description of Euler--Lagrange equations in moving frame coordinates is provided in~\cite[Sect.~1.4]{CuDuSn2010}.
  Additionally, an explicit derivation which explains the appearance of the coadjoint action is given in~\cite[Cor.~1.4.7]{CuDuSn2010}.
  The derivation here would be the same.  As the equations of motion are $\SE(2)$ invariant, we see that we can quotient out the $\SE(2)$ component and write them as equations on $P \cong TS \times \mathfrak{se}(2)$.
  Alternatively, we can view equations~\eqref{eq:horizontal_LP} and~\eqref{eq:vertical_LP} as an instance of Hamel's equations~\cite{BlochMarsdenZenkov2009} or a local version the Lagrange--Poincar\'e equations with an external force~\cite{MarsdenScheurle1993,CeMaRa2001}.

  Since these equations do not depend on $(\theta,x,y)$ anymore, they can be interpreted as living on the reduced space $P$.
  In summary, we observe $21$ degrees of freedom in~\eqref{eq:vertical_LP} and~\eqref{eq:horizontal_LP}  rather than $24$ degrees of freedom expressed in~\eqref{eq:full-EoM}.

\subsection{Linearizations about equilibria}
Let $q_* = (\ell_*,\theta_*,Z_*,x_*,y_*) \in Q$ be such that $\d U(q_*) = 0$.
Then $(q_*,0) \in TQ$ is an equilibrium point of the equations of motion~\eqref{eq:full-EoM}.
We can therefore consider the linearized equations over $(q_*,0)$ with respect to
the coordinates $(q,\dot{q}) = ( (\ell,Z,\theta,x,y) , (\dot{\ell},\dot{Z},\dot{\theta},\dot{x},\dot{y}) )$ on $TQ$.
It is a well-known result of the theory of linear oscillations, that the linearized system takes the form of a damped harmonic oscillator,
\[
  \frac{\d}{\d t} \begin{bmatrix} q \\ \dot{q} \end{bmatrix}
  = \begin{bmatrix} 0 & I \\ -\kappa & -\nu_* \end{bmatrix}
    \begin{bmatrix} q \\ \dot{q} \end{bmatrix}
\]
where $\kappa, \nu_* = \nu(q_*) \in \R^{12 \times 12}$ are positive (semi-)definite matrices given by
\[
  \kappa_{ij} := \left. \frac{ \partial^2 U}{\partial q^i \partial q^j } \right|_{q = q_*}
\]
and~\eqref{eq:viscosity_matrix}, respectively.
In particular, $\nu$ is the local manifestation of the dissipation force, and $\kappa$ represents the lowest order Taylor approximation of the potential energy at $q_*$~\cite{QEP}.

The principal bundle projection on $TQ$ is locally given by~\eqref{eq:Pi}
and the Jacobian of $\Pi$ at $(q_*,0)$ is locally given by the matrix
\begin{equation}\label{eq:harmonic_oscillator}
  D_{(q_*,0)} \Pi = \begin{bmatrix} \pr & 0 \\ 0 & \lambda \end{bmatrix}.
\end{equation}
where $\pr$ denotes the linear projection sending $(\ell,Z,\theta,x,y)$ to $(\ell,Z)$, and $\lambda$ is the linear isomorphism
which sends $(\dot{\ell},\dot{Z},\dot{\theta},\dot{x},\dot{y})$ to $(\dot{\ell},\dot{Z},\dot{\theta},a,b)$ by rotating $(\dot{x},\dot{y})$ by an angle of $-\theta_*$, i.e.\ the change of frame over $q_*$.
Under certain reasonable assumptions (see Assumption~\ref{ass:stability-assumptions} on page~\pageref{ass:stability-assumptions}), the reduced potential energy $\widehat{U}$ has a non-degenerate minimum which corresponds to the crawler resting motionless on the ground.
In this case we can verify that the kernel of $\kappa$ is the space generated by the action of $\SE(2)$, i.e.\ by translating and rotating along the ground.
Mathematically, this means
\begin{align*}
	\kernel(\kappa) = \operatorname{span} \left( \left.\pder{}{\theta} \right|_{q_*},\left.\pder{}{x} \right|_{q_*},\left.\pder{}{y} \right|_{q_*}\right).
\end{align*}
From~\eqref{eq:harmonic_oscillator} one can verify that
\begin{align*}
	\kernel( D_{(q_*,0)} \Pi ) = \kernel(\kappa).
\end{align*}
Therefore, by Proposition~\ref{prop:linearization}, the linearization of the reduced system on $P$ about the equilibrium $(s_*,0) = (\ell_*,Z_*,0) = \Pi(q_*,0)$ is given by
\begin{equation}\label{eq:linear-reduced}
	\frac{\d}{\d t} \begin{bmatrix} s \\ \xi \end{bmatrix}
  = \begin{bmatrix} 0 & \pr \\  - \hat{\kappa}\,\pr^T& - \hat{\nu}_* \end{bmatrix}
    \begin{bmatrix} s \\ \xi \end{bmatrix},
\end{equation}
where $\hat{\nu}_* := \lambda \nu_* \lambda^{T}, \hat{\kappa}:= \lambda \kappa \lambda^{T}$, and
where we have used the right inverse
\[
	(D_{(q_*,0)} \Pi)^{-1}_{\rm right}
	= \begin{bmatrix} \lambda^{T} \pr^T & 0 \\ 0 & \lambda^{T} \end{bmatrix}.
\]
Of course, \eqref{eq:linear-reduced} is nothing but the linearization of the reduced equations of motion about the equilibrium $(\ell_*,Z_*, 0) \in \SE(2)\backslash TQ$.
The matrix $\hat{\kappa}$ is an outer transformation of the Hessian of the reduced potential energy $\widehat{U} = \widehat{U}(\ell,Z)$.

\subsection{Stable equilibria}\label{sec:stable-equilibria}

  It is easy to intuit the existence of a stable equilibrium which corresponds to a stationary crawler resting on the ground.
 Such a point in phase space would be merely a single element of an entire $\SE(2)$-orbit of equilibria obtained by translating
 and rotating the crawler along the ground.
 Therefore, these equilibria can only be \emph{marginally} stable at best, as the vector field vanishes along the direction of this symmetry.
 However, it is possible that this $\SE(2)$-orbit projects to a (\emph{robustly}) stable equilibrium in the reduced system (in the sense of Definition~\ref{def:stable-equilibrium} below).
 We therefore turn to the reduced system and identify reasonably general conditions under which there exists a configuration $s_* \in \SE(2) \backslash Q$ which is a non-degenerate minimum of $\widehat{U}$.
 Then we apply Proposition~\ref{prop:stability} to conclude that $(s_*,0) \in P$ is a stable equilibrium.
 There exist a few competing definitions of stability, so to be completely unambiguous about what we mean, let us define
 \begin{defn}[Stable equilibrium]
   \label{def:stable-equilibrium}
   Let $\dot{x} = f(x)$ denote a dynamical system on a manifold $M$. Then we call $x_* \in M$ a \emph{robustly stable equilibrium} if $f(x_*) = 0$ and the spectrum of $D f(x_*)$ lies strictly left of the imaginary axis.
 \end{defn}
 This definition is to be seen in contrast to weaker notions such as \emph{marginal stability} wherein eigenvalues may lie on the imaginary axis.
  In particular, a robustly stable equilibrium is a hyperbolic fixed point which (locally) attracts solution curves at an exponential rate.

  To find a robustly stable equilibrium in our system, we make the following assumption:
  \begin{assn}\label{ass:stability-assumptions}~
    The rest lengths $\bar{\ell}_{ij}$ of the springs form a non-degenerate tetrahedron.
  \end{assn}

  We shall formulate the precise results that lead towards the existence of a robustly stable equilibrium in the propositions below and indicate the ideas of the proofs; the details can be found in Appendix~\ref{app:stability-proofs}.

  \begin{prop}\label{prop:potential-minimum}
    Under Assumption~\ref{ass:stability-assumptions},
    for sufficiently large $\kappa_\s$ and $\kappa_{\np}$ there exists a (local) minimum $s_* \in \SE(2) \backslash Q$ of the reduced potential $\widehat{U}$.
    This minimum is non-degenerate in the sense that the Hessian, $\hat{\kappa}$, of $\widehat{U}$ at $s_*$ is positive definite.
  \end{prop}
  
  For reasons which will be clear soon, we must have a guarantee that one mass of the equilibrium configuration has a larger $z$ coordinate than the others.
  Such a guarantee requires that the springs be sufficiently stiff to support the weight.
  This minimum spring stiffness, $\kappa_\s$, will implicitly depend on how close to degeneracy the tetrahedron formed by the rest lengths is; this ensures that the actual lengths, $\ell_{ij}$, of the energy-minimizing configuration form a non-degenerate tetrahedron.
  The idea now is to search for a configuration where the masses $1,2$ and $3$ `rest on the ground' and $4$ has coordinate $z_4 > 0$ raised above the influence of the ground potential.
  We view this as a singular perturbation problem: when the stiffnesses $\kappa_\s, \kappa_{\np}$ are infinite, then the solution is trivially the rigid tetrahedron with side
  lengths $\bar{\ell}_{ij}$ and resting on the ground, i.e.\ $z_1=z_2=z_3 = 0$.
  By rescaling, we turn it into a regular perturbation problem and apply the implicit function theorem to find a slightly perturbed stable configuration for large but finite $\kappa_\s, \kappa_{\np}$.

Secondly, the viscous friction is non-degenerate.
As a preliminary result to proving hyperbolic attractivity of the fixed point in Proposition~\ref{prop:stability}, we prove
\begin{prop} \label{prop:nondegenerate}
  The matrix $\hat{\nu}$ is positive definite on the vector bundle fiber of $P$ above $s_*$,
  where $s_*$ is the minimum found in Proposition~\ref{prop:potential-minimum}.
\end{prop}
The proof is given in Appendix~\ref{app:stability-proofs}.

Together with the nature of the minimum $s_*$ of $\widehat{U}$, this provides all prerequisites for the following
\begin{prop}\label{prop:stability}
  Let $s_* \in \SE(2) \backslash Q$ be a non-degenerate minimum of $\widehat{U}$, that is, $\d\widehat{U}(s_*) = 0$ and its Hessian $\hat{\kappa}$ is positive definite.
  Then $(s_*,0) \in P$ is a robustly stable equilibrium for the reduced system.
\end{prop}

The idea is that if no friction were present, then starting close to the stable equilibrium $(s_*,0)$ in phase space, the motion would be oscillatory.
Since the friction force is non-degenerate by Proposition~\ref{prop:nondegenerate}, the energy will decay asymptotically, sending the system to a standstill at $(s_*,0)$. We prove that this decay towards $(s_*,0)$ is exponential.
Note that any $q \in Q$ such that $\pi(q) = s_*$ produces an equilibrium $(q,0) \in TQ$ for the unreduced system.
However, any such $q$ is \emph{not} a robustly stable equilibrium (it is only marginally stable).

\subsection{Time-periodic perturbations} \label{sec:time_periodic}

  Given a dynamical system $\dot{x} = f(x)$ on a manifold $M$ with a robustly stable equilibrium $x_* \in M$, one can embed the system into a time-periodic augmented phase space $S^1 \times M$ by using the vector field $(\dot{t}, \dot{x}) = (1,f(x))$.
  Then the trajectory $\gamma_0(t) = (t,x_*) \in S^1 \times M$ is a limit cycle for the system on $S^1 \times M$ which locally attracts at an exponential rate.
  The orbit $\Gamma_0 := S^1 \times \{ x_* \}$ is a compact normally hyperbolic invariant submanifold, and so the theorem on persistence of normally hyperbolic invariant manifolds~\cite{Fenichel1971,Hirsch77} applies.
  Specifically, given a sufficiently small\footnote{%
    To be more precise, the perturbation must be small in $C^1$ supremum norm. The Lagrange--d'Alembert vector field was already smooth (after application of a mollifier).
    Since we augmented the phase space with periodic time, these theorems also require the perturbation to be $C^1$ with respect to time.
    Note however that this can be relaxed to continuous~\cite[Remark~4.1]{Eldering2013} and possibly integrable dependence on time.%
  } time-periodic perturbation $f \mapsto f + \varepsilon g_t $, we can assert the existence of a \emph{persistent} limit cycle,  $\gamma_{\varepsilon}$, in a neighborhood of $\gamma_0$  (see also `The Averaging Theorem' in~\cite{GuckenheimerHolmes}).

  In the previous subsection, we found a robustly stable equilibrium in $P$.
  In this section, we will perturb this system by substituting time $T$-periodic lengths $\bar{\ell}_{ij}(t)$ for the constant rest lengths $\bar{\ell}_{ij}$.
  If these oscillations are small, we can expect to observe a $T$-periodic limit cycle, $(t, \hat{\gamma}(t))$, in the augmented phase space $S^1 \times P$.
  Thus $\hat{\gamma}(t) = (\ell(t) , Z(t) , \dot{\ell}(t), \dot{Z}(t), \dot{\theta}(t) , a(t) ,b(t))$ is a stable periodic trajectory of the original time-periodic system on $P$.
  However, if $\gamma(t)$ is a trajectory in $TQ$ which projects down to $\hat{\gamma}(t) \in P$, then it is generally not the case that $\gamma(t)$ is periodic.
  In particular, a periodic trajectory $\hat{\gamma} \subset P$ is the projection of many trajectories $(q ,\dot{q} )(t) \in TQ$ such that
\begin{equation}\label{eq:rel_periodic}
  (q,\dot{q})(t + T) = (\Theta, \Delta ) \cdot (q,\dot{q})(t),
\end{equation}
for some (fixed) element $(\Theta , \Delta) \in \SE(2)$.
  Trajectories which satisfy conditions such as~\eqref{eq:rel_periodic} are known as \emph{relatively periodic orbits}.
  A relatively periodic orbit $\gamma(t)$ emanating from an initial condition $\gamma(0) \in TQ$ will project down to a periodic orbit $\hat{\gamma}(t) = \Pi( \gamma(t))$ in $P$.
  Conversely, an orbit $\gamma(t)$ which projects down to a periodic orbit $\hat{\gamma}(t) = \Pi( \gamma(t) )$ in $P$ is necessarily a relatively periodic orbit in $TQ$.

  Moreover, if $\hat{\gamma}$ is a stable limit cycle in $P$, then the relatively periodic orbits in $TQ$ are stable as well, and marginally stable along the $\SE(2)$ orbits.
  In this case the orbits in $TQ$ are dubbed `relative limit cycles' in that they are relatively periodic and stable.
  For our system, the \emph{phase} $(\Theta, \Delta) \in \SE(2)$ corresponds to the translation and rotation of the crawled after one cycle.
  This completes the proof of all claims in our main theorem~\ref{thm:limit-cycle}.

The phase can be reconstructed from the periodic orbit $\hat{\gamma}(t) \in P$ in the following way.
\begin{thm}
Let $\hat{\gamma}(t) \in P$ be a $T$-periodic orbit
and let $\gamma(t) = \frac{\d q}{\d t}(t) \in TQ$ be such that $\hat{\gamma}(t) = \Pi(\gamma(t))$.
Then the phase of the relative periodic orbit, $\gamma(t)$,
is obtained by solving the initial value problem on $\SE(2)$ given by:
\begin{equation}\label{eq:reconstruction}
  \begin{cases}
    &\frac{\d \Theta}{\d t} = \dot{\theta}(t) \\
    &\frac{\d \Delta_x}{\d t} = \cos(\Theta) a(t) - \sin(\Theta) b(t) \\
    &\frac{\d \Delta_y}{\d t} = \sin(\Theta) a(t) + \cos(\Theta) b(t) \\
    &\Theta(0) = 0 \quad,\quad \Delta(0) = 0.
	\end{cases}
\end{equation}
The phase is $(\Theta(T) , \Delta(T) )$.
Alternately, we may write the reconstruction equation as a left-invariant ODE on $\SE(2)$ as $\dot{g} = g \cdot \eta(t)$
where $\eta(t) = (\dot{\theta},a,b)(t) \in \mathfrak{se}(2)$ and $g(t) \in \SE(2)$.
\end{thm}

\begin{proof}
	Let $\hat{\gamma}(t) = (\ell,Z,\dot{\ell},\dot{Z},\dot{\theta},a,b)(t) \in P$ be a $T$-periodic orbit.
	Let $\gamma(t) = (\ell,Z,\theta,x,y,\dot{\ell},\dot{Z},\dot{\theta},a,b)(t)$ be a relative periodic orbit in $TQ$
	which projects to $\hat{\gamma}$.  Assume $x(0) = y(0) = 0$.
	Moreover, we will assume $\gamma(t) \in TQ$ is the time derivative of a curve in $Q$.  That is:
	\begin{align*}
		&\frac{\d\ell}{\d t} = \dot{\ell} \quad,\quad \frac{\d Z}{\d t} = \dot{Z} \quad,\quad \frac{\d\theta}{\d t} = \dot{\theta} \\
		&\frac{\d x}{\d t} = \cos(\theta) a - \sin(\theta) b \quad,\quad
     \frac{\d y}{\d t} = \sin(\theta) a + \cos(\theta) b
	\end{align*}
	As $\SE(2)$ acts upon the $\theta,x,y$ coordinates freely and transitively,
	there must exist a unique curve $(\Theta , \Delta)(t) \in \SE(2)$
	such that
	\begin{align*}
		(\theta(t) , x(t) , y(t)) &= (\Theta , \Delta)(t) \cdot (\theta(0) , x(0) , y(0))  \\
		&= \left(\theta(0) + \Theta(t) , 
			\begin{pmatrix}
				\cos(\Theta(t)) x(0) - \sin(\Theta(t)) y(0) + \Delta_x(t) \\
				\sin(\Theta(t)) x(0) + \cos(\Theta(t)) y(0) + \Delta_y(t)
			\end{pmatrix} \right).
	\end{align*}
	From the equation for $\frac{\d\theta}{\d t}$ it is clear that $\frac{\d \Theta(t)}{\d t} = \dot{\theta}$.
	Upon taking the derivative of $x(t)$ in the above equation we find
	\begin{align*}
		\frac{\d}{\d t}
		\begin{pmatrix}
			x \\
			y
		\end{pmatrix}
		(t)		=
		\dot{\theta}(t)
		\begin{pmatrix}
			          - \sin(\Theta(t)) & -\cos(\Theta(t)) \\
			\hphantom{-}\cos(\Theta(t)) & -\sin(\Theta(t))
		\end{pmatrix}
		\begin{pmatrix}
			x(0) \\
			y(0)
		\end{pmatrix}
		+ \frac{\d \Delta}{\d t}(t)
	\end{align*}
	We first consider the case $\big(\theta(0),x(0),y(0)\big) = (0,0,0)$, so we can ignore the first term.
	Upon substitution of the equation for $\d x/\d t$ and $\d y/\d t$ into the previous line the claim follows.
	Note that we have derived the phase $(\Theta,\Delta)(T) \in \SE(2)$
	purely in terms of coordinate functions on $P$. More abstractly put, we have solved the left-invariant ODE
  \begin{equation*}\label{eq:phase-ODE}
    \dot{g} = g \cdot \eta(t), \qquad g(0) = \textrm{id},
  \end{equation*}
  with $g(t) = (\Theta,\Delta)(t) \in \SE(2)$.

  Next we let $h(t) = (\theta,x,y)(t) \in \SE(2)$ and consider the
  general case $h_0 = \big(\theta(0),x(0),y(0)\big) \neq \textrm{id}$.
  Again, since $\SE(2)$ acts freely and transitively on itself, there
  exists a unique curve $g(t)$ such that $h(t) = g(t) \cdot h_0$.
  Taking a time derivative and substituting $g(t) = h(t) h_0^{-1}$
  yields
  \begin{equation*}
    \dot{g} = \dot{h} h_0^{-1}
            = g (h h_0^{-1})^{-1} \dot{h} h_0^{-1}
            = g h_0 h^{-1} \dot{h} h_0^{-1}
            = g \cdot \textrm{Ad}_{h_0^{-1}} \big(\eta(t)\big),
  \end{equation*}
  since $\eta = h^{-1} \dot{h}$ and still with initial condition
  $g(0) = \textrm{id}$. The phase shift is given by
  \begin{equation*}
    g(T) = h_0^{-1} \cdot (\Theta,\Delta) \cdot h_0 \in \SE(2),
  \end{equation*}
  which is the original phase shift $(\Theta,\Delta)$ conjugated by
  the initial condition $h_0 \neq \textrm{id}$. Note that the initial
  condition $h_0$ multiplied the left-invariant vector field $\eta(t)$
  from the right, and hence modified it.
\end{proof}

  To compute the phase shift $(\Theta, \Delta)$, we have to integrate~\eqref{eq:reconstruction} over one cycle of a periodic orbit.
  The periodic orbit of interest to us is a persistent limit cycle,
  whose existence we can assert, but whose form is not known to us.
  Fortunately, the present system is simple enough to be studied in computer simulations, see  section~\ref{sec:numerical-sim}.
  The simulations we carried out revealed that the phase shift appears generically to be non-zero, but to depend on the perturbation size to second order.
  A heuristic explanation for this result can be given by the fact that `making a step' requires the combined variation of position \emph{and} velocity of the masses, leading to a quadratic dependence on the perturbation size.
  The variation in position is needed to displace the crawler's weight towards a leg and the variation in velocity to actually move the other leg(s).

  We shall now give a rigorous argument that $(\Theta, \Delta) \in \mathcal{O}(\varepsilon^2)$ with $\varepsilon$ the perturbation size parameter.
  First of all, since the fixed point $(s_*,0) \in \SE(2) \backslash TQ$ is hyperbolic, the perturbation of the limit cycle will scale linearly with $\varepsilon$ as well (this follows from smooth dependence of a NHIM on parameters).
  Let us denote the periodic orbit in $\SE(2) \backslash TQ$ by
  \begin{equation*}
    (s,\xi)_\varepsilon(t)
    = (\ell,Z,\dot{\ell},\dot{Z},\dot{\theta},a,b)_\varepsilon(t).
  \end{equation*}

  Recall that the potential forces (including the actuation forces) do not act along the group directions, hence the Lagrange--d'Alembert equations with respect to the associated moving frame coordinates $\eta = (\dot{\theta},a,b) \in \mathfrak{se}(2)$ reduced to~\eqref{eq:vertical_LP}:
  \begin{equation*}
    \frac{\d}{\d t} \pder{ \widehat{L}}{\eta} - \ad^*_\eta \Big( \pder{\widehat{L}}{\eta} \Big)
    = -\hat{\nu}_{\eta s}(s) \, \dot{s} - \hat{\nu}_{\eta\eta}(s) \, \eta.
  \end{equation*}
  When we integrate~\eqref{eq:vertical_LP} over a full period, we find that
  the first term on the left-hand side integrates to zero as it is
  the time derivative of a periodic function. We also note
  that the coordinates $\dot{s} = (\dot{\ell},\dot{Z})$ are induced velocity
  coordinates of periodic coordinates $s = (\ell,Z)$ and hence integrate
  to zero as well, and finally that the second term
  is quadratic in the velocities $\xi$.

  We now perform a Taylor expansion in $\varepsilon$ using notation
  \begin{align*}
    s_\varepsilon(t)
    &= s^{(0)}(t) + \varepsilon s^{(1)}(t) + \mathcal{O}(\varepsilon^2), \\
    \xi_\varepsilon(t)
    &= \xi^{(0)}(t) + \varepsilon \xi^{(1)}(t) + \mathcal{O}(\varepsilon^2),
  \end{align*}
  where $(s^{(0)},\xi^{(0)})(t) = (s_*,0)$ is the
  unperturbed fixed point and we note that
  $\xi_\varepsilon(t) \in \mathcal{O}(\varepsilon)$. Expanding the
  right-hand side term of~\eqref{eq:vertical_LP} yields
  \begin{equation}\label{eq:friction-1st-order}
    \hat{\nu}_{\eta s} (s(t)) \, \dot{s}(t) +  \hat{\nu}_{\eta \eta} (s(t)) \, \eta(t)
    = \varepsilon\, (\hat{\nu}_{\eta s}(s_*) \, \dot{s} +  \hat{\nu}_{\eta \eta} (s_*) \, \eta^{(1)}(t) ) + \mathcal{O}(\varepsilon^2)
  \end{equation}
  since $\xi^{(0)} = 0$. Finally we substitute the right hand side of~\eqref{eq:vertical_LP} with the right hand side of~\eqref{eq:friction-1st-order}
  and integrate over a period to yield
  at order $\varepsilon^1$
  \begin{equation*}
    0 = - \varepsilon \int_0^T \hat{\nu}_{\eta s}(s_*) \, \dot{s} +  \hat{\nu}_{\eta \eta} (s_*) \, \eta^{(1)}(t)  \,\d t.
  \end{equation*}
  As $\hat{\nu}(s_*)$ is constant and $\dot{s}$ is the
  time derivative of a periodic function, only the integral over
  $\eta^{(1)}(t)$ remains. Since $\hat{\nu}_{\eta\eta}(s_*)$ is
  simply $\hat{\nu}(s_*)$ restricted to the linear subspace
  spanned by the $\eta$ coordinates, it is still non-degenerate. As a
  result we conclude that
  \begin{equation}\label{eq:eta-integral}
    0 = \int_0^T \eta^{(1)}(t) \,\d t.
  \end{equation}
  To conclude that the first order perturbation of the phase shift is
  zero, we have to integrate the ODE $\dot{g} = g\cdot\eta(t)$ on
  $\SE(2)$ over a period. The Magnus expansion gives that
  \begin{equation*}
    g(T) = \operatorname{exp}\Big(
             \int_0^T \eta(t)\,\d t +
  \frac{1}{2}\int_0^T \int_0^{t_1} [\eta(t_1),\eta(t_2)] \,\d t_1 \,\d t_2 + \cdots
           \Big)
  \end{equation*}
  where further terms contain repeated commutator brackets.
  From~\eqref{eq:eta-integral} it follows that the first term in the
  exponent vanishes at order $\varepsilon$, while all further terms
  vanish at order $\varepsilon$ due to the appearance of (repeated) commutators
  of $\eta_\varepsilon(t) \in \mathcal{O}(\varepsilon)$. The upshot
  here is that the curvature (i.e.~non-Abelianness) of a group
  expressed by these commutators only contributes at higher orders of
  $\varepsilon$. This proves our claim that
  $(\Theta,\Delta) \in \mathcal{O}(\varepsilon^2)$.

  On the other hand, one can generically expect to see a non-zero
  phase shift at order $\varepsilon^2$, as corroborated by our
  numerical simulations in the next section. There are two
  contributing effects to this. Firstly, in the second order expansion
  \begin{equation*}\label{eq:friction-2nd-order}
    \hat{\nu}(s) \cdot \xi
    = \varepsilon  \, \hat{\nu}(s_*) \cdot \xi^{(1)}
     +\varepsilon^2\, \Big(
        \big(D_i \hat{\nu}(s_*) \cdot \xi^{(1)}\big) \, s^{(1)\,i}
               + \hat{\nu}(s_*) \cdot \xi^{(2)}\Big)
     + \mathcal{O}(\varepsilon^3)
  \end{equation*}
  the term $(D_i \hat{\nu}(s_*) \cdot \xi^{(1)})\,s^{(1)\,i}$
  integrates to a non-zero contribution over a period when $\hat{\nu}$
  depends non-trivially on the reduced configuration variables
  $s \in \SE(2) \backslash Q$. This can be viewed in contrast
  to~\cite{ChangSoo2013}, where damping induced self-recovery of a
  cyclic variable is studied, and hence a non-zero phase shift cannot
  occur. That setting assumes that $\hat{\nu}$ does not depend on the other
  variables.

  Secondly, the non-Abelianness of $\SE(2)$ allows for a non-zero
  contribution even when $\int_0^T \eta(t) \,\d t = 0$. This can be
  thought of as a holonomy defect due to curvature of the symmetry
  group; indeed the defect depends to second order on the path length,
  which is of order $\varepsilon$.

\section{Numerical simulations}\label{sec:numerical-sim}

  In this section we numerically compute trajectories to better understand this system.
  We first present results for a 2D walker with three masses in the $xz$-plane, see Figure~\ref{fig:2D-crawler}.
  In this case the phase shift is simply a translation $\Delta x \in \R$, but all other features of the model are still retained; at the end of the section we show results for a 3D simulation.

  In particular, for the 2D model, we consider the time dependent spring lengths
  \begin{align*}
	\bar{\ell}_{1} (t) &= 1 + \varepsilon \cos(\omega t)  \\
	\bar{\ell}_{2} (t) &= 1 - \varepsilon \sin\Big(\omega\Big(t - \frac{1}{2}\Big)\Big) \\
	\bar{\ell}_{3} (t) &= 3 - \bar{\ell}_{1}(t) - \bar{\ell}_{2}(t)
  \end{align*}
  where $\omega = 2\pi$ and we vary the amplitude $\varepsilon > 0$.  Additionally we use the parameters:
  $\kappa_\np = 10$, $c_\ns = 10$, $\kappa_\s = 10$, $c_\db = 5$, and $c_\s = 10$.

 \begin{figure}[p]
    \centering
    \input{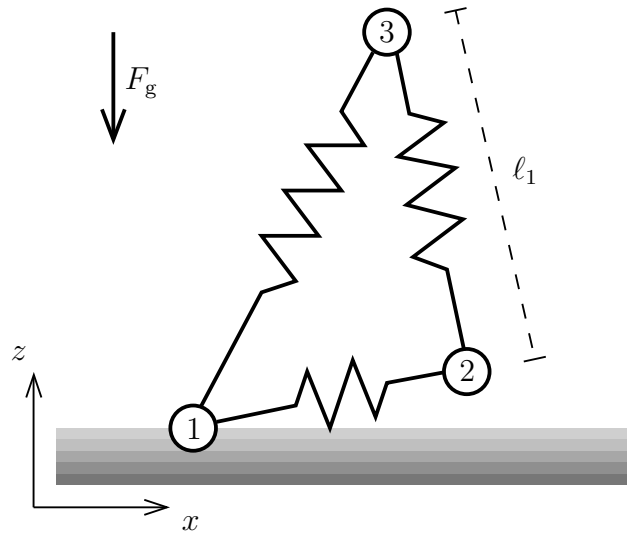}
    \caption{The 2D crawler}
    \label{fig:2D-crawler}
 \end{figure}

 \begin{figure}[p] 
    \centering
    \includegraphics[width=5.5in]{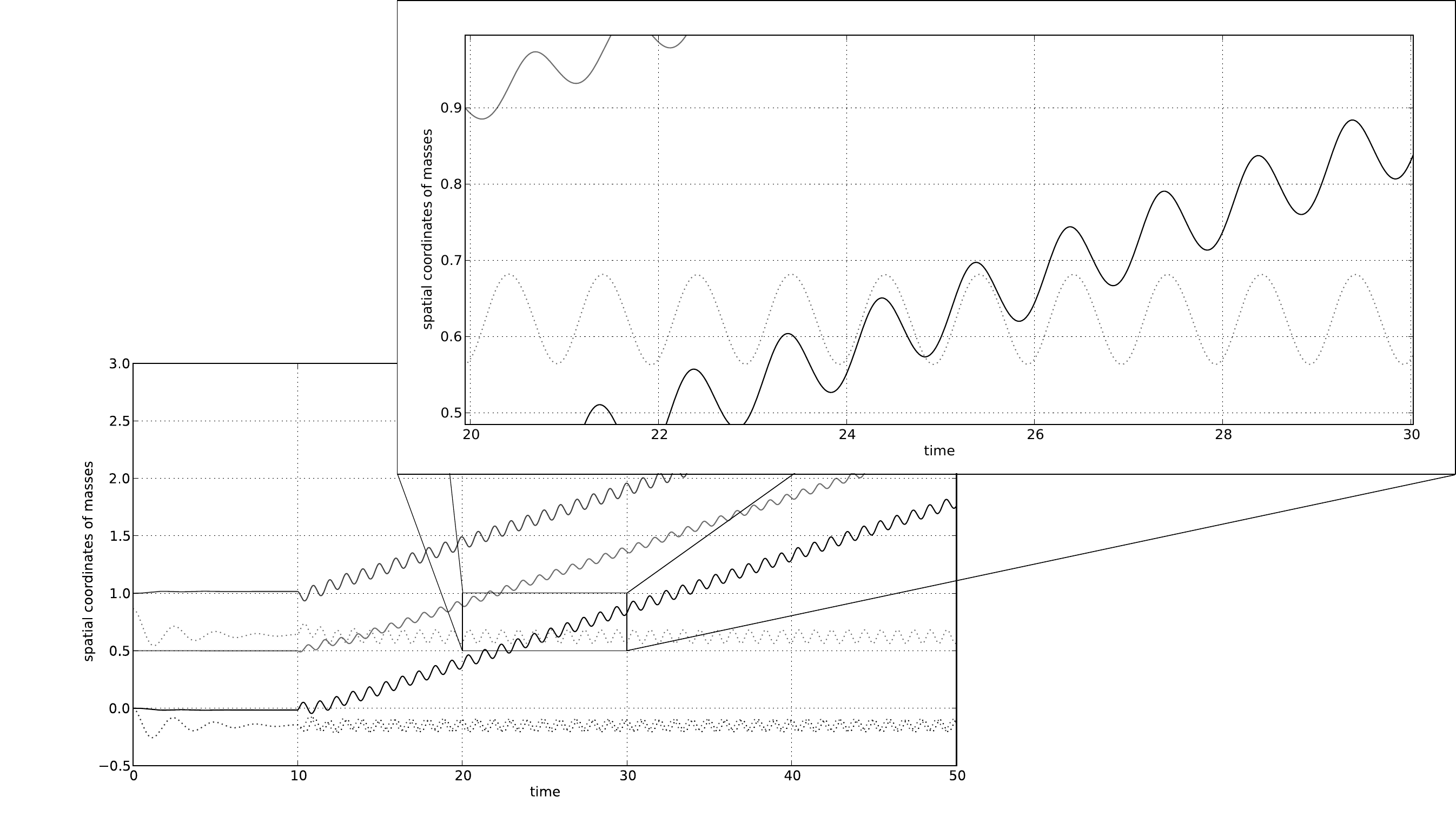}
    \caption{This plot depicts the $x$ coordinates (bold lines) and the $z$ coordinates (thin lines) of the three masses for a trajectory where $\varepsilon = 0.5$.  The system is activated at $t=10$.}
    \label{fig:evolution}
 \end{figure}

\begin{figure}[p] 
   \centering
   \includegraphics[width=4in]{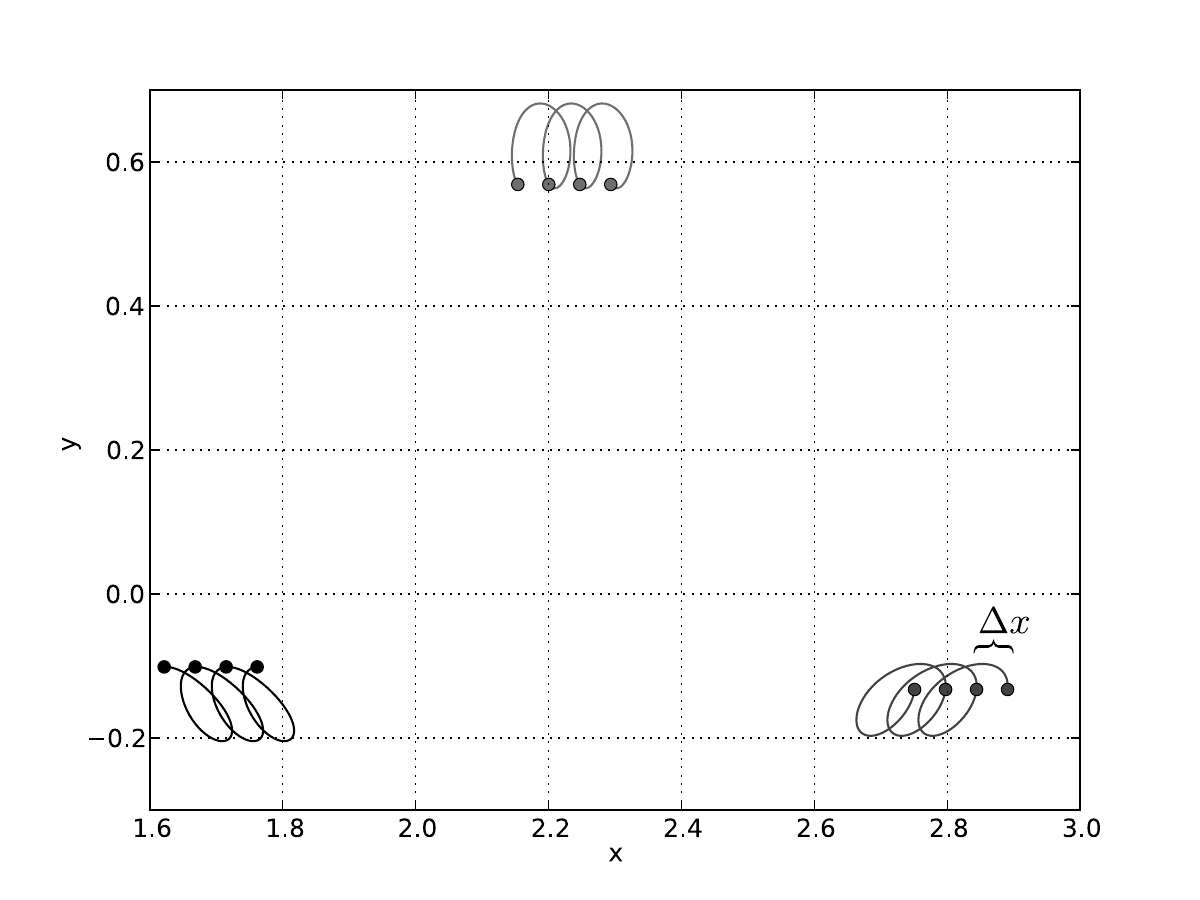}
   \caption{Depicted are the trajectories of the masses in space over the final three periods plotted in figure~\ref{fig:evolution}.  Above the trajectory of the bottom right mass we have indicated the phase shift of $\Delta x = 0.046$.}
   \label{fig:mass_traj}
\end{figure}

  To test our theory we allow the system $10$ seconds of inactivity (i.e.~$\varepsilon = 0$) so that the system settles towards an equilibrium.
  Then, at $t=10$ we set $\varepsilon = 0.5$.
  The system appears to converge to a relatively periodic orbit after a few periods, see Figure~\ref{fig:evolution}.
  This relatively periodic orbit exhibits a phase shift of $\Delta x = 0.046$, and so we observe a steady drift in the positive $x$-direction.
    We observe that both the $x$ and $z$ coordinates oscillate with angular frequencies of $2 \pi$, as predicted by our analysis in Section~\ref{sec:time_periodic}, i.e.\ the period of the relative limit cycle is identical to the period of the perturbation.
  To further illustrate this relatively cyclic behavior we have plotted the locations of the masses over three time-periods in Figure~\ref{fig:mass_traj} where one can clearly see how each period is identical to the previous period up to the constant shift $\Delta x = 0.046$.  Finally, this value of $\Delta x$ was observed to be robust to small but randomly chosen changes in the initial conditions.  This is in agreement with the theory that $\Delta x$ is ultimately a function of the time dependent lengths $\bar{\ell}_k(t)$ only, implicitly defined through the phase reconstruction formula~\eqref{eq:reconstruction}.

  Although we do not have a proof that $\Delta x$ is generically non-zero, a few trial perturbations all yielded non-zero $\Delta x$ values.
  The simulations do support the claim that the first variation of $\Delta x$ with respect to the perturbation is zero, while the second variation is non-zero.
  In particular we have calculated trajectories for various $\varepsilon$'s, and computed the quantities
  \[
  	p_k = \frac{ \log \left| \Delta x_k \right| - \log\left| \Delta x_{k-1} \right| }{ \log( \varepsilon_k) - \log(\varepsilon_{k-1}) },
  \]
  to detect the scaling of $\Delta x$ with the perturbation size.
  If $\Delta x$ is proportional to $\varepsilon^2$ then we should find that $p_k \approx 2$.  The results are summarized in Table~\ref{tab:deltax}.

\begin{table}[htbp]
  \begin{tabular}{|c|l|c|}
  	\hline
    $\varepsilon$ & $\quad\Delta x$ & $p$ \\
    \hline
    1    & 0.17870  & 1.9372 \\
    1/2  & 0.04666  & 1.9932 \\
    1/4  & 0.01172  & 1.9980 \\
    1/8  & 0.002934 & 1.9990 \\
    1/16 & 0.000734 & 2.0039 \\
    1/32 & 0.000183 &  N/A   \\
    \hline
  \end{tabular}
  \vspace{2ex}
  \caption{The values of $\Delta x$ for various perturbation sizes $\varepsilon$.}
  \label{tab:deltax}
\end{table}

Finally, Figure~\ref{fig:3D_curved} shows a simulation of a 3D walker.
Here, the trajectory is clearly curved due to the (very small) phase
shift having both a translational and rotational component. Phase
shifts that consist purely of either rotations or translations are
easily constructed by choosing the right symmetry for the
perturbation.

\begin{figure}[hptb]
   \centering
   \includegraphics[width=5in]{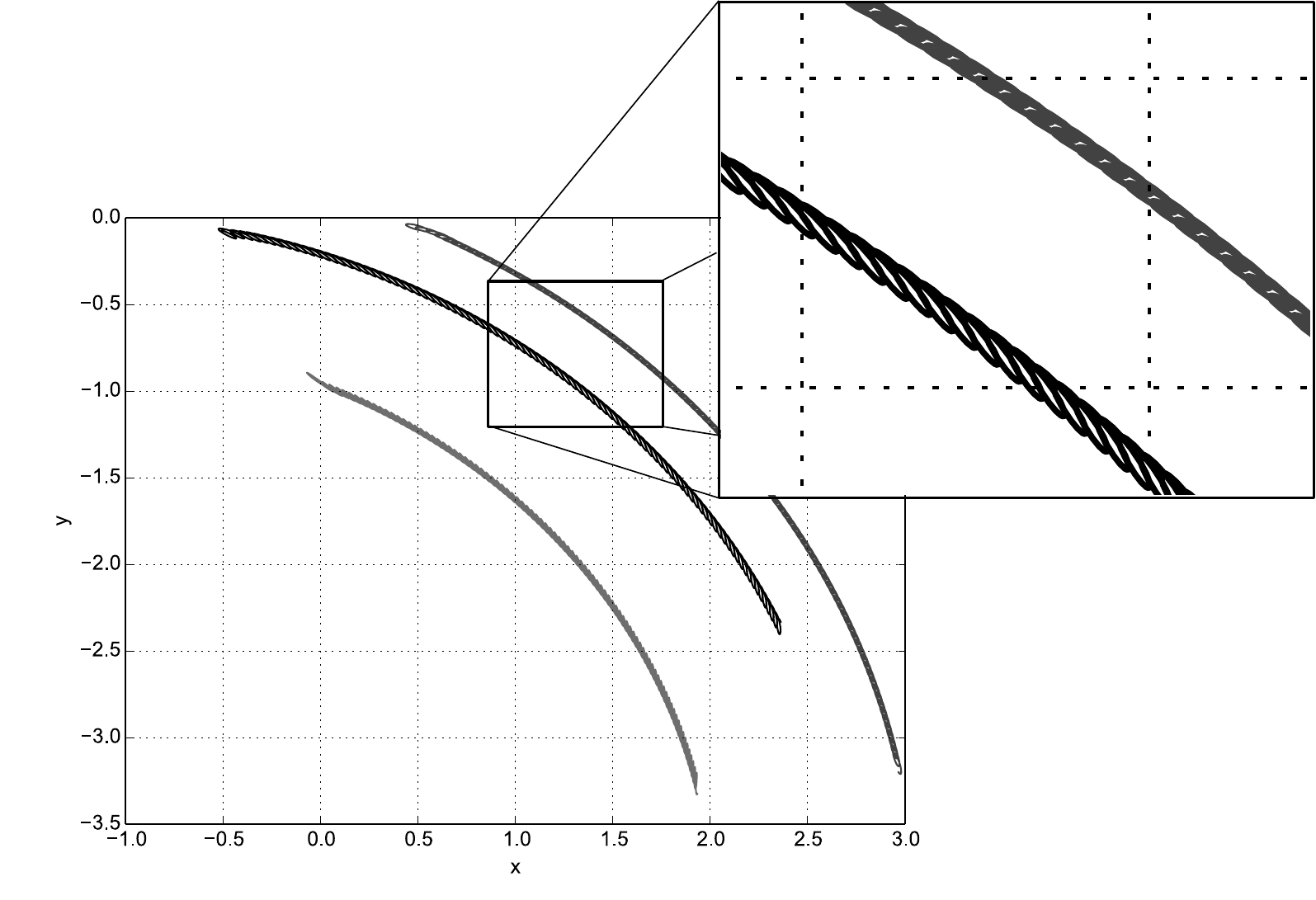}
   \caption{Depicted are the trajectories in the $xy$-plane of the three masses on the ground for the 3D crawler. The zoom box shows the single cycles, while the overall trajectories are clearly seen to curve.}
   \label{fig:3D_curved}
\end{figure}

\FloatBarrier

\section{Outlook \& conclusion}
In this paper we have shown that regularized models are capable of exhibiting behavior which resembles crawling, by constructing a model with a robust relative limit cycle.
  Such models are open to classical techniques in dynamical systems, and allow one to view crawling as a limit cycle in a reduced space, while the absolute motion manifests as a phase shift after reconstruction.
  These ideas are generic enough that it seems feasible to apply them to a range of other scenarios.

  Furthermore, the work suggests a number of follow-up questions to pursue:
\begin{enumerate}
  \item It would be interesting to investigate if the limit cycles in the regularized model persist under singular perturbation limits $\kappa_\np , c_\ns \to \infty$.  Such an observation would help bridge the gap between this perspective and the hybrid systems approach.
  \item While the limit cycle in the paper is stable, the size of the stability basin is not addressed.
  Having a large stability basin is one method of achieving robustness, and so a lower bound for the radius of this basin would be useful to have.
  \item A non-flat ground breaks symmetry, but may still be addressed using normal hyperbolicity theory if the ground is still sufficiently close to flat.
  Similarly, small random or time dependent perturbations will only slightly perturb the relative limit cycle; in particular, the phase shift of each cycle is close to that without these perturbations.
\end{enumerate}

Lastly, we would hope that at least a portion of these ideas would aid in studying stable walking models.
In our model we constructed a crawling-like limit cycle as a small perturbation of an unactuated system and made use of the fact that stability along all of the limit cycle was preserved.
This makes our model not directly applicable to walking, which is typically considered to be `statically unstable' (e.g.\ in the inverted pendulum models, the walker collapses to the floor when the joints are not active).
On the other hand, if one finds a model for walking with a limit cycle that is stable as a whole (that is, its Poincar\'e map is stable), then that cycle can be used as a starting point, and Lie theory can still be used to find a reduced description and a reconstruction formula for the phase shift.
Furthermore, the resulting limit cycles would persists under small perturbations as described above.

\subsection{Acknowledgments}
  The notion of realizing the no-slip condition as a limit of viscous friction was brought to the attention of H.J.\ by Dmitry V.\ Zenkov, while J.E.\ learned this from Hans Duistermaat.
  Sam Burden first enlightened H.J.\ on the role of limit cycles in model reduction for hybrid systems.
  We also thank Tony Bloch, Hamed Rasavi, Justin Seipel, and Ram Vasudevan for helpful conversations during the development of this paper.
  Finally, the initial stimulus to write this paper was given by Jair Koiller, who has been very supportive of our foray into biomechanics.
  Both authors were supported by the European Research Council Advanced Grant 267382 FCCA and H.J.\ also by the NSF grant CCF-1011944.

\appendix

\section{Friction dominated dynamics as singular perturbation}
\label{app:friction-dynamics}

In this appendix we expand a bit more on obtaining first order
equations of motion in the friction dominated regime, i.e.\ when
inertial forces are negligible. We shall rigorously justify the
resulting equations by a geometric singular perturbation argument.
Furthermore, we investigate what happens when one moves away from this
friction dominated limit. For an introduction to geometric singular
perturbation theory we refer the reader to~\cite{Jones1995,Kaper1999}
or the foundational work~\cite{Fenichel1979}.

We consider again a general mechanical system as in
Section~\ref{sec:connections}, without the requirement that $Q$ is a
principal $G$-bundle; we do require that $Q$ is compact\footnote{%
  This is for technical reasons of applying normal hyperbolicity
  theory. Compactness can be replaced by uniformity conditions,
  see~\cite{Eldering2013}.%
}. That is, as in section~\ref{sec:connections} we have a Lagrangian
\begin{equation*}
  L(q,\dot{q}) = \frac{m}{2} k_q(\dot{q},\dot{q}) - V(q)
\end{equation*}
and a Rayleigh dissipation function
\begin{equation*}
  R(q,\dot{q}) = \frac{c}{2} \nu_q(\dot{q},\dot{q})
\end{equation*}
such that both $k$ and $\nu$ are Riemannian metrics on $Q$.
Furthermore, we add a time dependent arbitrary force, which can be
used to control the system, and we absorb the potential term
$-\d V(q)$ into it. This leads to equations of motion
\begin{equation}\label{eq:EoM-forced}
  m\,k^\flat \cdot \nabla^k_{\dot{q}} \dot{q}
  = -c\,\nu_q^\flat \cdot \dot{q} + F(q,t).
\end{equation}
From this, one can formally obtain first order dynamics by setting
$m = 0$. This defines an invariant\footnote{%
  The manifold $M$ is time dependent since the vector field is. This
  seems a contradictory statement, but should be interpreted as $M$
  being invariant in the extended phase space $\T Q \times \R$. That
  is, a solution curve starting in $M(t_0)$ at time $t_0$ ends up in
  $M(t)$ under the time dependent flow $\Phi^{t_0,t}$. We shall
  suppress this explicit time dependence to not clutter the equations too
  much.%
} manifold $M \subset \T Q$, see~\eqref{eq:invar-1st-order-dynamics},
which can also be interpreted as a (time dependent) vector field on
$Q$ with dynamics
\begin{equation}\label{eq:h-section}
  \dot{q} = h(q,t) := \frac{1}{c} \nu_q^\sharp \cdot F(q,t).
\end{equation}
This result can be obtained rigorously by viewing it as a singular
perturbation problem in the limit $m \to 0$. Moreover, the singular
perturbation analysis will allow us to obtain correction terms to the
dynamics for $m$ close to zero; these terms will not be interpretable
anymore as a linear connection on $Q$.

Let us start by writing out~\eqref{eq:EoM-forced} in induced
coordinates on $\T Q$ and rewrite it as a second order system
\begin{equation*}\label{eq:EoM-coords}
  \begin{aligned}
    m \Big(\dot{v}^i + \Gamma^i_{kl}(q) v^k v^l\Big)
    &= - c\, k^{ij}(q)\, \nu_{jk}(q)\, v^k + k^{ij}(q)\, F_j(q,t), \\
    \dot{q}^i &= v^i.
  \end{aligned}
\end{equation*}
The limit $m \to 0$ is singular as $m$ multiplies a
derivative on the left-hand side. This can be remedied by introducing
a rescaled, `fast' time variable $\tau = \frac{t}{m}$, i.e.\ $\tau$
measures time at a fine-grained scale, hence in this time-scale one
mainly observes fast processes. We conventionally denote a derivative
with respect to $\tau$ by a prime and obtain
\begin{equation}\label{eq:EoM-rescaled}
  \begin{aligned}
    v'^i &= - m\,\Gamma^i_{kl}(q) v^k v^l
            - c\, k^{ij}(q)\, \nu_{jk}(q)\, v^k + k^{ij}(q)\, F_j(q,t(\tau)), \\
    q'^i &= m\,v^i.
  \end{aligned}
\end{equation}
Note that this system is well-defined even for $m=0$ and this limit is
aptly called the `frozen time picture' as motion in $q$ has been
killed by the rescaling\footnote{%
  The time dependent term $F(q,t(\tau))$ can still be interpreted
  correctly for $m = 0$, by viewing~\eqref{eq:EoM-rescaled} as a
  rescaling of the vector field without explicitly reparametrizing
  time. See also~\cite[Sect.~4.1]{Eldering2013} for the fact that
  normal hyperbolicity can be extended to this setting; this we will
  use later.%
}. We shall denote by $X_m$ the vector field on $\T Q$ associated
to~\eqref{eq:EoM-rescaled}. The vector field $X_0$ by construction has
\begin{equation*}
  M = \{ c\,\nu_q^\flat \cdot \dot{q} = F(q,t) \} = \text{Graph}(h)
\end{equation*}
as an invariant manifold consisting of fixed points. Furthermore, a
linearization of $X_0$ at points $(q,v) \in M$ along the fiber
direction yields that
\begin{equation*}
  \pder{X_0^i}{v^k}\Big|_M = - c\, k^{ij}(q)\, \nu_{jk}(q).
\end{equation*}
Note that this has strictly negative eigenvalues since
$k^\sharp(q) \cdot \nu^\flat(q)$ is similar to the positive definite
$k^\sharp(q)^\frac12 \cdot \nu^\flat(q) \cdot k^\sharp(q)^\frac12$.
This implies that $M$ is an (attractive) normally hyperbolic invariant
manifold for $X_0$ and hence it persists for sufficiently small
$m > 0$ as a manifold $M_m$ that is invariant under $X_m$ and
diffeomorphic and $C^k$-close to the original $M$ (with
$k \in \mathbb{N}$ large, but depending on $m$),
see~\cite[Thm~1]{Fenichel1971} and~\cite[Thm~4.4]{Hirsch77}. It
follows as an easy corollary that $M_m$ depends $C^k$-smoothly on $m$, see e.g.~\cite[Sect.~4.2]{Eldering2013}.

\begin{figure}[h]
  \centering
  \input{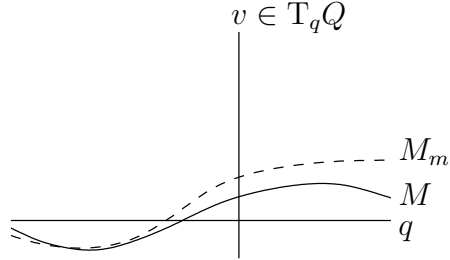}
  \caption{The invariant manifolds $M$ and $M_m$ as graphs of $h$ and $h+\eta_m$.}
  \label{fig:invar-mfld}
\end{figure}

Since $M_m$ is still invariant, we can consider the restricted vector
field $X_m|_{M_m}$ and study its Taylor expansion around $m = 0$. We
may assume that all manifolds $M_m$ are the graph of a section of
$\T Q$, and hence we can represent $X_m|_{M_m}$ by its projection onto
$M$ or onto the base $Q$, which are both fixed.
The latter representation can be identified with the first
order vector field. We calculate the Taylor expansion in local
coordinates adapted to the projection onto $M$, that is, we use
coordinates $q$ on $Q$ and shifted velocity coordinates
$w = v - h(q,t(\tau))$ where $h(q,t)$ is the section that
defines $M$ in $(q,v)$ coordinates, see~\eqref{eq:h-section} and also
Figure~\ref{fig:invar-mfld}. Furthermore, let $w = \eta_m(q,\tau)$ define
$M_m$ as a graph relative to $M$ and note that $\eta_0 \equiv 0$. We introduce
some new notation to shorten the following exposition of the singular
perturbation analysis; this is also more in line with the notation
used in this field. The vector field $X_m$ can be expressed in $(q,w)$
coordinates as
\begin{equation}\label{eq:sing-pert-vf}
  \begin{split}
    q' &= f_m(q,w) = m (h(q,t(\tau)) + w),\\
    w' &= g_m(q,w) = \begin{aligned}[t]
                       -&m\,(\Gamma^i_{kl}(q) v^k v^l \partial_i)
                        - c\,k^\sharp\,\nu^\flat\,v + k^\sharp\,F(q,t(\tau))\\
                       -&m\,\D_i h(q)\, v^i - m\,\partial_t h(q,t(\tau)),
                     \end{aligned}
  \end{split}
\end{equation}
where $v = h(q,t(\tau)) + w$. Furthermore, we expand
\begin{equation*}
  f_m(q,w) = \sum_{k \ge 0} m^k\,f^{(k)}(q,w),
\end{equation*}
implicitly truncated at an appropriate order and we use the same
notation for $g$ and $\eta$.

Invariance of $M_m$ under $X_m$ implies that
\begin{equation*}\label{eq:invariance}
  w' = \frac{\d}{\d \tau} \eta_m(q) = \D \eta_m(q) \, q'.
\end{equation*}
Inserting the vector field~\eqref{eq:sing-pert-vf} into this equation
together with $w = \eta_m(q)$, yields
\begin{equation}\label{eq:invariance-expanded}
  g_m(q,\eta_m(q)) = \D \eta_m(q) \, f_m(q,\eta_m(q)).
\end{equation}
Now we perform a Taylor expansion with respect to $m$ on both sides.
Noting that $\eta_0(q) = 0$, $f_0(q,w) = 0$ and $g_0(q,0) = 0$, we
find $0 = 0$ at zeroth order, and at first order
\begin{equation*}\label{eq:invariance-m1}
  g^{(1)}(q,0) + \D_2 g^{(0)}(q,0) \eta^{(1)}(q) = 0.
\end{equation*}
Normal hyperbolicity of $M$ implies that all eigenvalues of
$\D_2 g^{(0)}(q,0)$ have non-zero (and in our case negative) real
part. Hence we can invert it to solve for $\eta^{(1)}$ and find
\begin{equation*}\label{eq:eta1}
  \eta^{(1)}(q) = \frac{1}{c} \nu^\sharp\, k^\flat \Big[
                    (\nabla^k_h h)(q) +\frac{1}{c} \nu^\sharp\, \partial_t F(q,t(\tau))\Big].
\end{equation*}
Furthermore, the projection of $X_m|_{M_m}$ onto $Q$ is given at first
order by
\begin{equation*}
  q' = m\,f^{(1)}(q,0) + \mathcal{O}(m^2).
\end{equation*}
Note that this (rescaled time) vector field does not contain a zeroth
order term, so we can scale it back to normal time by dividing by $m$.
Letting $\pi\colon \T Q \to Q$ denote the tangent bundle projection,
we thus obtain a well-defined limit vector field
\begin{equation*}
  \lim_{m \to 0} \frac{1}{m} \T \pi \circ X_m \circ (h+\eta_m) \in \mathfrak{X}(Q)
\end{equation*}
which is given in coordinates by
\begin{equation*}\label{eq:normal-vf-limit}
  \dot{q} = f^{(1)}(q,0) = h(q,t),
\end{equation*}
by inserting~\eqref{eq:sing-pert-vf}. Note that this is indeed the first
order dynamics found earlier.

Secondly, we can use the singular perturbation analysis to obtain more
terms in the Taylor expansion of $\T \pi \circ X_m|_{M_m}$, which add
corrections when $m > 0$. These can be found iteratively from the
`master equation'~\eqref{eq:invariance-expanded}; we shall recover one
more term here. A straightforward calculation yields that the second
order term in $f_m(q,\eta_m(q))$ is
\begin{multline*}
  \begin{aligned}
    m^2\,\Big[& \frac{1}{2} f^{(2)}(q,0) + \D_2 f^{(1)}(q,0)\,\eta^{(1)}(q)\\
              &+\frac{1}{2} \D_2^2 f^{(0)}(q,0)\,\eta^{(1)}(q)^2
               +\frac{1}{2} \D_2   f^{(0)}(q,0)\,\eta^{(2)}(q) \Big]
  \end{aligned}\\
  = m^2\, \D_2 f^{(1)}(q,0)\,\eta^{(1)}(q)
  = \frac{m^2}{c} \nu^\sharp\, k^\flat \Big[
      (\nabla^k_h h)(q) + \frac{1}{c} \nu^\sharp\, \partial_t F(q,t(\tau))\Big].
\end{multline*}
This leads to a corrected first order vector field
\begin{equation}\label{eq:normal-vf-m1}
  \dot{q} = h(q,t) + \frac{m}{c}  \nu^\sharp\,k^\flat \Big[
                       (\nabla^k_h h)(q) + \frac{1}{c} \nu^\sharp\,\partial_t F(q,t)
                     \Big]
                   + \mathcal{O}(m^2).
\end{equation}
Note that the term in brackets could be interpreted as the total time
derivative of $h(q,t)$, were it not that this would introduce a
circular dependency in the definition of $\dot{q}$.

Finally, let us return to the context of $Q$ being a left
$G$-principal bundle. The (ideal) Stokesian regime can be defined as
the values of $m$ and $c$ where
\begin{equation*}
  \dot{q} = h(q,t) = \frac{1}{c} \nu^\sharp F(q,t)
\end{equation*}
holds accurately. Let us assume that $F(q,t)$ is a control force that
acts on the shape space $S = G \backslash Q$. This means that $F$
takes values in the annihilator of $\textrm{Ver}(\T Q)$, i.e.\ $F$
does no work along displacements along $G$-orbits. Then we can view
$u(q,t) := \nu^\sharp F(q,t) \in \textrm{Hor}^\nu(\T Q)$ as a control
on shape space, and the Stokes connection determines how solution
curves $s(t) \in S$ are lifted to curves in $G \backslash \T Q$.

However, if we extend our notion of the Stokesian regime and include
the first order perturbation terms in~\eqref{eq:normal-vf-m1}, then
the vector field generally does not take values in
$\textrm{Hor}^\nu(\T Q)$ anymore. This is because
$\nu^\sharp\,k^\flat$ does not preserve this subbundle and we can
choose $h(\,\cdot\,,t) = 0$ and $\partial_t F(\,\cdot\,,t) \neq 0$
independently such that $\nabla^k_h h = 0$ while
$\nu^\sharp\,\partial_t F(q,t) \in \textrm{Hor}^\nu(\T Q)$. Thus, in
this perturbed Stokes regime, the well-known Scallop Theorem does not
hold anymore. This agrees with a numerical experiment we performed
where the shape force curve $F(\,\cdot\,,t)$ had one-dimensional
image, but non-constant time parametrization and a small, non-zero
phase shift was observed. We conclude that our crawler model seems to be in
the `perturbed Stokes regime' but not in the Stokes regime in the
classical sense.

\section{Stability proofs}
\label{app:stability-proofs}

In this appendix we collect the detailed proofs for the statements in Section~\ref{sec:stable-equilibria}.

\begin{proof}[Proof of Proposition~\ref{prop:potential-minimum}]
  To simplify the analysis we change to a (local) coordinate system for $\SE(2) \backslash Q$ given by $(\ell,Z)$ with $\ell = (\ell_{12},\dots,\ell_{34}) \in (\mathbb{R}^+)^6$ and $Z = (z_1,z_2,z_3)$.
  In these coordinates, and under the assumption that the height of the $4$th mass is positive, the (reduced) potential energy takes the form
  \begin{equation*}\label{eq:red-potential}
    \hat{U} = \left( \frac{\kappa_\s}{2} \sum_{j>i} ( \ell_{ij} - \bar{\ell}_{ij})^2  \right)
             + \left( \sum_{i=1}^{3} z_i + \kappa_{\np}\, \chi(z_i) \right) + z_4(\ell,Z).
  \end{equation*}
  Note that $z_4$, the gravitational potential of the $4$th mass, depends on the shape variables $\ell$ and $Z$ in an intricate way which we shall not endeavor to make explicit.
  Thus we search for a solution $s_* = (\ell_*,Z_*) \in \SE(2) \backslash Q$ of
  \begin{equation}\label{eq:dU-zero}
    \begin{aligned}
      0 = \d\hat{U}(s_*)
       &= \sum_{j>i} \d\ell_{ij} \left( \kappa_\s (\ell_{ij} - \bar{\ell}_{ij}) + \pder{z_4}{\ell_{ij}} \right)\\
       &{}\;\;+\sum_{i=1}^3 \d z_i \left( 1 + \kappa_{\np}\,\chi'(z_i) + \pder{z_4}{z_i} \right).
    \end{aligned}
  \end{equation}
  We recover the solution $s_*$ by an implicit function argument.
  Let us define the function
  \begin{equation*}\label{eq:impl-func}
    F\big((\ell,Z),\varepsilon\big) =
    \begin{bmatrix}
      \ell_{12} - \bar{\ell}_{12} + \varepsilon \pder{z_4}{\ell_{12}} \\
      \vdots \\
      \ell_{34} - \bar{\ell}_{34} + \varepsilon \pder{z_4}{\ell_{34}} \\[0.5em]
      1 + \kappa_{\np}\,\chi'(z_1) + \pder{z_4}{z_1} \\
      \vdots \\
      1 + \kappa_{\np}\,\chi'(z_3) + \pder{z_4}{z_3}
    \end{bmatrix} \in \R^9.
  \end{equation*}
  A zero of $F$ corresponds to a solution of~\eqref{eq:dU-zero} if we set the parameter $\varepsilon = 1 / \kappa_\s$;
 we first search for a zero with $\varepsilon = 0$ though.
 That is, we consider the singular limit of infinite spring stiffness.
 This implies $\ell_{ij} = \bar{\ell}_{ij}$.
 Note that when the ground potential $\kappa_{\np}\chi$ rises steeply enough, it follows by energy arguments that $z_1 \approx z_2 \approx z_3 \approx 0$, so the springs $\ell_{12},\ell_{13},$ and $\ell_{23}$ are oriented approximately horizontally.

 Now we shall use a geometric argument to show that $1+\partial z_4 / \partial z_i > 0$ for $i = 1,2,3$ with $\ell = \bar{\ell}$ fixed.
 First, w.l.o.g.\ we can assume that the rigid tetrahedron with lengths $\bar{\ell}_{ij}$ and with masses $1,2,3$ on the ground is in stable equilibrium, possibly by permuting the masses.
 An equilibrium exists by potential energy minimization, and this minimum must be non-degenerate; if it were not, then the center of mass would be above one of the ground edges, but rotation about this axis would then lower the center of mass, see Figure~\ref{fig:stable_tetrahedron}.
 This image also shows that mass 2 must be closer to the edge $\bar{\ell}_{13}$ horizontally than mass 4, which implies that $\partial z_4 / \partial z_2 > -1$. The same holds for $i=1,3$ too.

 \begin{figure}[htb]
   \centering
   \includegraphics{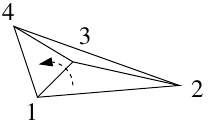}
   \caption{A tetrahedron in stable equilibrium with masses $1,2,3$ on the ground plane and a possible rotation about the edge $\bar{\ell}_{13}$.}
   \label{fig:stable_tetrahedron}
 \end{figure}

  Further, $\chi'(z)$ is monotonically decreasing without bound from $0$ as $z \to -\infty$.  It follows that there are unique values $z_i < 0$ such that the point $s_0 = (\bar{\ell}_{12},\dots, \bar{\ell}_{34},z_1,z_2,z_3 )$ solves $F(s_0,0) = 0$.

  The derivative of $F$ with respect to the variables $(\ell,Z)$ at $s_0$ is found to be
  \begin{equation*}
    DF(s_0,0) =
    \begin{bmatrix}
      I_6 & B \\
      0    & A+\kappa_{\np}\,I_3
    \end{bmatrix},
  \end{equation*}
  where $B_{ij,k} =\pder{^2 z_4}{\ell_{ij} \partial z_k}$ for $j>i$ and $A$ is the Hessian of $Z \mapsto z_4(\bar{\ell},Z)$.
 Note that if $\kappa_{\np}$ is sufficiently large, then $A+\kappa_{\np}\,I_3$ is positive definite.
 The eigenvalues $\lambda$ of $DF(s_0,0)$ are recovered from
  \begin{equation*}
    0 = \det(DF(s_0,0) -  \lambda I_9)
      = (1-\lambda)^3 \det(A+\kappa_{\np}\,I_3 - \lambda I_3)
  \end{equation*}
  and found to be all positive.
  In particular $DF(s_0,0)$ is invertible and we can apply the implicit function theorem to conclude that there exists an $\varepsilon_0 > 0$ such that for any $0 \le \varepsilon < \varepsilon_0$ there exists a $s_\varepsilon$ such that $F(s_\varepsilon,\varepsilon) = 0$.
  Setting $\kappa_\s = 1/\varepsilon$ will give that $s_* = s_\varepsilon$ is a solution for~\eqref{eq:dU-zero}.

  Before fixing $\varepsilon$, let us prove that the Hessian $\hat{\kappa}$ of $\hat{U}$ at a candidate minimizer $s_\varepsilon$ is positive definite.
  From the definition of the potential it follows that
  \begin{equation*}\label{eq:hatK}
    \hat{\kappa} =
    \begin{bmatrix}
      \kappa_{\np}\,I_3 & 0\\
      0                 & \kappa_\s\,I_6
    \end{bmatrix} + D^2 z_4,
  \end{equation*}
  where $D^2 z_4$ is the Hessian of $z_4$ as a function of $\ell$ and $Z$.
  Note that the first term is positive definite and by choosing $\kappa_\s$ and $\kappa_{\np}$ sufficiently large, we can make it dominate the term $D^2 z_4$ such that $\hat{\kappa}$ as a whole is positive definite.
  We finally choose $\varepsilon$ sufficiently small such that we obtain both that $s_* = s_\varepsilon$ is a minimizer of $\hat{U}$ and $\kappa_\s = 1/\varepsilon$ is large enough that $\hat{\kappa}$ is positive definite.
\end{proof}

To prove Proposition~\ref{prop:nondegenerate} we invoke the following Lemma.

\begin{lem}\label{lem:positive_def}
  If $A_1, \dots , A_n$ are positive semi-definite linear operators on a finite-dimensional inner-product space $(V , \lb \cdot , \cdot \rb)$ and $\bigcap_{k = 1}^{n}{ \ker(A_k) } = \{ 0 \}$,
  then $A = \sum_{k=1}^{n}{ A_k }$ is positive definite.
\end{lem}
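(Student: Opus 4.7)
The plan is to establish positive definiteness directly from the definition. First I would note that $A = \sum_k A_k$ is automatically positive semi-definite, since for any $v \in V$ we have $\lb A v, v \rb = \sum_{k=1}^n \lb A_k v, v \rb \ge 0$ as a sum of nonnegative terms. Thus it suffices to verify that $\lb A v, v \rb = 0$ forces $v = 0$.

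The key observation is that for a positive semi-definite operator $A_k$ on a finite-dimensional inner-product space, $\lb A_k v, v \rb = 0$ implies $A_k v = 0$. I would obtain this from the existence of a positive semi-definite square root $\sqrt{A_k}$ (guaranteed by the spectral theorem for self-adjoint operators, which positive semi-definite operators are), giving $\lb A_k v, v \rb = \lb \sqrt{A_k} v, \sqrt{A_k} v \rb = \norm{\sqrt{A_k} v}^2$. Vanishing of this quantity forces $\sqrt{A_k} v = 0$ and hence $A_k v = 0$, so $v \in \ker(A_k)$.

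Combining the two steps: if $\lb A v, v \rb = 0$, then since every summand $\lb A_k v, v \rb$ is nonnegative and they sum to zero, each one must vanish individually. By the preceding paragraph, $v \in \ker(A_k)$ for every $k$, and therefore $v \in \bigcap_{k=1}^n \ker(A_k) = \{0\}$ by hypothesis. This yields $v = 0$ and completes the proof. There is no substantive obstacle here; the only point requiring any care is the passage from vanishing of the quadratic form of a single $A_k$ to vanishing of $A_k v$, which is the reason the hypothesis is phrased in terms of kernels rather than the weaker-looking condition on quadratic forms.
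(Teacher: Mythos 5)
Your proposal is correct and follows essentially the same argument as the paper: positive semi-definiteness of the sum, vanishing of each quadratic form $\lb A_k v, v\rb$, hence $v \in \ker(A_k)$ for all $k$, hence $v = 0$. The only difference is cosmetic — you argue directly rather than by contradiction and spell out, via the square root $\sqrt{A_k}$, the step from $\lb A_k v, v\rb = 0$ to $A_k v = 0$, which the paper states without justification.
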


\begin{proof}
  Clearly $A$ is positive semi-definite as a sum of semi-definite operators.
  We must prove that $A$ is definite.
  Assume $A$ is not definite so that there exists some non-zero $x \in V$ such that $\lb x , Ax \rb = 0$.
  This latter equation can be written as $\sum_{k=1}^{n}{ \lb x , A_k x \rb } = 0$.
  By semi-definiteness of each $A_k$ this implies $\lb x , A_k x \rb = 0$.
  This means that $A_k x = 0$ for each $k$.
  However the only such $x$ is $0$.
\end{proof}

\begin{proof}[Proof of Proposition~\ref{prop:nondegenerate}]
	Let $q_* \in Q$ be such that masses $1$, $2$ and $3$ are within the influence of the ground forces (i.e.\ such that the $z$ coordinates are within the support of $\chi$).
	The force $F_\s:TQ \to \T^*Q$ can be expressed as a degenerate metric $\nu_\s : TQ \oplus TQ \to \mathbb{R}$
	via the equation $\nu_\s(v,w) =\langle F_\s(v) , w  \rangle$ where $\langle \cdot , \cdot \rangle$ denotes the
	canonical pairing between $\T^*Q$ and $TQ$.
	The same can be said of forces $F_{\ns}$ and $F_{\db}$ with respect to degenerate metrics $\nu_{\ns}$ and $\nu_{\db}$.
	
	We can see that $\nu_\s = c_\s \sum_{i < j} \d \ell_{ij} \otimes \d \ell_{ij}$.
	Thus the kernel of $\nu_\s$ is the set of infinitesimal transformations which preserve the lengths of the spring line segments.
	By assumption the springs form a non-degenerate tetrahedron, so these transformations are generated by $\mathfrak{se}(3)$, the $6$-dimensional space of infinitesimal isometries of $\R^3$.
	We can denote the generated space by $\mathfrak{se}(3) \cdot q_*$.

	Under standing the assumption that $z_4 > 0$, we find that
	\begin{equation*}
	  \nu_{\ns} = c_{\ns} \sum_{i=1}^{3} \chi'(z_i) (\d x_i \otimes \d x_i + \d y_i \otimes \d y_i),
	\end{equation*}
	so the kernel is precisely spanned by the infinitesimal changes in height of masses $1$, $2$ and $3$,
	as well as arbitrary infinitesimal changes in position of mass $4$.
	That is, translations along the coordinate directions $z_1,\dots,z_4$ as well as $x_4$ and $y_4$.
	
	Finally, $\nu_{\db} =  c_{\db} \sum_{i=1}^{3} \chi(z_i) \d z_i \otimes \d z_i$
	so its kernel consists of translations along the coordinate directions $x_1,\dots,x_4,y_1\dots,y_4,$ and $z_4$.
	
	We then observe directly that $\ker(\nu_{\db}) \cap \ker(\nu_{\ns}) = {\rm span} \left( \partial_{x_4},\partial_{y_4},\partial_{z_4}\right)$.
	Such transformations will move the $4$th mass, while keeping the others fixed.
	This is not a rigid transformation generated by $\mathfrak{se}(3)$.
	Therefore
	\begin{align*}
		\ker(\nu_{\db}) \cap \ker(\nu_{\ns}) \cap \ker(\nu_\s) = {\rm span} \left( \partial_{x_4},\partial_{y_4},\partial_{z_4}\right) \cap \mathfrak{se}(3) \cdot q_* = \{0 \}.
	\end{align*}
	By Lemma \ref{lem:positive_def} then, $\nu = \nu_{\db} + \nu_{\ns} + \nu_\s$ is positive definite on the fiber above $q_*$.
	As $\hat{\nu}$ is merely the push-forward of $\nu$ by the projection $\Pi: TQ \to P$, it is related to $\nu$ by an outer automorphism
	and is therefore positive definite as well.
\end{proof}

\begin{proof}[Proof of Proposition~\ref{prop:stability}]
  Firstly, $(s_*,0)$ is an equilibrium for the reduced system, and its linearization is given by Proposition~\ref{prop:linearization}.
  To assert that it is a robustly stable equilibrium, we consider its linearization~\eqref{eq:linear-reduced},
  \begin{equation*}
    \frac{\d}{\d t}\begin{bmatrix} s \\ \xi \end{bmatrix}
    = A            \begin{bmatrix} s \\ \xi \end{bmatrix}
    \quad\text{with}\quad
    A = \begin{bmatrix} 0 & \pr \\ - \kappa\,\pr^T & -\hat{\nu} \end{bmatrix},
  \end{equation*}
  where $\pr = \begin{bmatrix} I_9 & 0 \end{bmatrix}$ represents the principal bundle projection $\pi\colon Q \to \SE(2) \backslash Q$ in fiber-adapted coordinates.
  Recall that $\kappa$ and $\hat{\nu}$ are positive (semi-)definite matrices describing the linearized potential and friction forces, respectively.
  It follows from the definition $\widehat{U} = U \circ \pr$ and $\pr \, \pr^T = I_5$ that $\kappa\,\pr^T = \pr^T \hat{\kappa}$.

  Note that it is sufficient to prove that the linear flow satisfies $\norm{e^{At_0}} \le r < 1$ for some $t_0 > 0$, $r < 1$, and any choice of norm.
  From this it follows that the flow contracts exponentially for large $t$: write $t = n t_0 + \tau$ with $n \in \mathbb{N}$ and $\tau \in [0,t_0)$, then we have
  \begin{equation*}
      \norm{e^{A t}}
    = \norm{e^{A(n t_0 + \tau)}}
    = \norm{(e^{A t_0})^n e^{A\tau}}
    \le \sup_{0 \le \tau \le t_0} \norm{e^{A\tau}} r^n
    = C e^{\rho t}
  \end{equation*}
  with $\rho = \frac{\log(r)}{t_0} < 0$ and $C = \sup_{0 \le \tau \le t_0} \norm{e^{A\tau}}e^{-\rho \tau} < \infty$.

  We choose the norm induced by the (approximate) energy function
  \begin{equation*}\label{eq:linear-hamiltonian}
    E_L(s,\xi) = \frac{1}{2}\lb \xi,\xi \rb
                +\frac{1}{2}\lb s,\hat{\kappa}\,s \rb
  \end{equation*}
  for the linear system~\eqref{eq:linear-reduced}, i.e.\ $E_L = \norm{\,\cdot\,}^2$.
  This energy is a (non-strict) Lyapunov function in the sense that
  \begin{equation*}
    \frac{\d E_L}{\d t}
    = \pder{E_L}{s}   \frac{\d s}{\d t}
     +\pder{E_L}{\xi} \frac{\d \xi}{\d t}
    = \lb \hat{\kappa} \cdot s , \pr \cdot \xi \rb
     +\lb \xi , -\kappa\,\pr^T \cdot s - \hat{\nu} \cdot \xi \rb
    = -\lb \xi ,\hat{\nu} \cdot \xi \rb < 0
  \end{equation*}
  for all $\xi \neq 0$, since $\hat{\nu}$ is positive definite.
  To prove that $\norm{e^{At_0}} \leq r < 1$, let $\norm{(s,\xi)} = 1$ and note that since $E_L$ is non-increasing along solution curves, we can from now on restrict our analysis to the compact ball $\overline{B(0;1)} = E_L^{-1} ( [0,1] )$.

  The proof would be finished if $E_L$ were strictly decreasing, but this does not hold true for points $(s,0)$ in phase space.
  Instead, then, we have $\dot{\xi} = -\pr^T \hat{\kappa} s \neq 0$, so after a short time interval, $\xi \neq 0$, and thus $E_L$ starts decreasing.
  Thus fixing a $t_0 > 0$, we find that $E_L$ strictly decreases along any solution curve over a time interval of length $t_0$, for all initial conditions $\norm{(s,\xi)} = 1$.
  By continuous dependence of a flow on initial parameters and compactness, it follows that the decrease of $E_L$ is uniformly bounded away from zero, and hence we have $\norm{e^{At_0}} \le r < 1$ for some $r < 1$.
\end{proof}

\bibliographystyle{amsplain}
\bibliography{hoj_je_crawler}

\end{document}